\newcommand{\C}{\mathbb{C} }
\newcommand{\R}{\mathbb{R} }
\newcommand{\Z}{\mathbb{Z} }
\newcommand{\T}{\mathbb{T} }
\newcommand{\Q}{\mathbb{Q} }
\newcommand{\N}{\mathbb{N} }
\newcommand{\F}{\mathbb{F}}
\newcommand{\lf}{K} 
\newcommand{\roi}{\mathfrak{o}} 
\newcommand{\uniformizer}{\pi} 
\newcommand{\rroi}{\mathbb{Z}_p} 
\newcommand{\partition}{\mathcal{P}} 
\newcommand{\scales}{\mathcal{R}} 
\newcommand{\oddprime}{\mathfrak{p}}
\newcommand{\mydegree}{k}
\newcommand{\eof}[1]{e({#1})}
\newcommand{\vof}[1]{{\boldsymbol #1}}
\newcommand{\trace}[2]{{\Tr_{#1}(#2)}}
\def\dd{{\,{\rm d}}}
\def\th{{{\rm th}}}
\DeclareMathOperator{\dist}{dist}
\DeclareMathOperator{\Tr}{Tr}
\DeclareMathOperator{\characteristic}{char}
\DeclareMathOperator{\Arg}{Arg}
\DeclareMathOperator{\rp}{Re}
\DeclareMathOperator{\ip}{Im}
\newtheorem{theorem}{Theorem}[section]
\newtheorem{lemma}[theorem]{Lemma}
\newtheorem{proposition}[theorem]{Proposition}
\newtheorem{corollary}[theorem]{Corollary}
\newtheorem*{Rolle}{Rolle's Theorem}
\newtheorem*{Lagrange}{Lagrange Interpolation}
\theoremstyle{definition}
\theoremstyle{remark}
\newtheorem{remark}[theorem]{Remark}
\numberwithin{equation}{section}
\newcommand{\mmod}[1]{\,\,(\text{mod}\,\,#1)}
\title{\vspace*{-1.5cm}Reinforcing a Philosophy:\\ A counting approach to square functions over local fields}
\author{\vspace{-2mm}Kirsti D. Biggs$^{\ast}$, Julia Brandes and Kevin Hughes}
\address{KB: Department of Mathematics, KTH Royal Institute of Technology, 10044 Stockholm, Sweden.}
\email{kirstib@kth.se}
\address{JB: Department of Mathematical Sciences, University of Gothenburg and Chalmers University of Technology, 41296 Gothenburg, Sweden.}
\email{brjulia@chalmers.se}
\email{khughes.math@gmail.com} 
\address{$^{\ast}$Corresponding Author}
\dedicatory{This work is dedicated to the memory of Prof. Lynne H. Walling, to whom each of the authors is deeply indebted. Her strong and supportive personality will be sorely missed.}
\begin{document}

\begin{abstract}
In this paper, we study square functions for extension operators over finite-type, planar curves endowed with the Euclidean arclength measure. 
We prove new results for curves of the form $(T,\phi(T))$ where $\phi(T)$ is a polynomial of degree at least 2. This includes new estimates for such curves given by monomials $\phi(T) = T^k$ for $k \geq 3$ which are uniform over all local fields whose characteristic is coprime to \(k\). 
Key to our approach is a systematic analysis of the second order differencing polynomial and its geometry in local fields. 
\end{abstract}

\maketitle

% % % 
\section{Introduction}
% % % 

% % 
\subsection{Background}
% % 

In the intersection of arithmetic and harmonic analysis there is a hierarchy of operators. In increasing strength, these begin with mean value estimates from analytic number theory, which are implied by discrete restriction estimates, which are in turn implied by decoupling estimates, and finally with square functions at the top of the hierarchy whose bounds imply those below. Though all of these phenomena differ, a natural question is to examine how close they are to each other. To close the loop, it suffices to ask: \emph{In what way and under what conditions can methods for counting solutions to Diophantine equations be adapted to deduce square function estimates?}  

This hierarchy has been examined by many researchers from both analytic and number theoretic vantage points. From the number theory viewpoint, Wooley's development of his efficient congruencing method in \cite{Wooley:cubic, Wooley:discrete, Wooley:NEC} introduced the philosophy that arithmetic methods could be adapted to climb this hierarchy. 
This was in contrast to the work \cite{BDG} on decoupling for the moment curve, which proved the Vinogradov Mean Value Conjectures through proving the associated sharp decoupling inequality by purely analytic methods, thus reinforcing the analytic viewpoint of directly proving sharp decoupling estimates from which the desired number theory estimates swiftly follow.

The work at hand was motivated by \cite{GGPRY} which established square function estimates for non-degenerate curves in $\R^n$ by counting near-solutions to the associated system of underlying equations. Their main point was that the diagonal behavior underlying the Vinogradov system of equations can be strengthened to count near-solutions, and they used this behavior to upgrade the classical estimate for the number of solutions to a square function estimate. 
This gave an elegant solution to the question in the first paragraph for non-degenerate, real curves. 

Against this backdrop, the purpose of the work here is to reinforce and expand this number theoretic philosophy by proving square function estimates in two new settings. The first setting is for degenerate curves and the second is over more general local fields. 
By providing an argument that is agnostic with respect to the ground field, we illustrate the universality of a counting approach.

% % 
\subsection{An indication of our results}
% % 

Due to our setting over general local fields, stating our results in their full generality is somewhat notation-heavy, so we defer this to Section \ref{section:setup}. Presently, we will describe a version of our result over $\R$ in order to give the reader a rough idea of our main theorem. 

Let $\phi$ denote a continuous function over $\R$, and consider the planar curve $\gamma(T) := (T,\phi(T))$ in $\R^2$. When $I \subset \R$ is measurable, we define the \emph{extension operator on $\gamma$ over $I$} via its action on a continuous and compactly supported function $f$ by putting   
\[
E_{I}f(\vof{x}) 
:= 
\int_{I} f(\xi) \, e^{-2\pi i \langle \gamma(\xi),\vof{x} \rangle} \dd{\xi} 
\quad \text{for points} \quad \vof{x} \in \R^2.
\]
Next, let $\partition_{R^{-1}}([-1/2,1/2])$ be a partition of the interval $[-1/2,1/2]$ into adjacent subintervals of length $R^{-1}$ where $R\in\N$. We then define the \emph{square function at scale} $R \in \N$ as 
\[
S_{\partition_{R^{-1}}([-1/2,1/2])} f(\vof{x}) 
:= 
\big( \sum_{J \in \partition_{R^{-1}}([-1/2,1/2])} |E_{J}f(\vof{x})|^2 \big)^{1/2}
.\]

In this notation, our main result over $\R$ is as follows.  
\begin{theorem}\label{theorem:real:2D}
Let $k \geq 2$. Suppose that $\gamma(T) := (T,\phi(T))$ with $\phi(T) \in C^{\mydegree}([-1/2,1/2])$ is such that $|\partial^{\mydegree}{\phi}(t)| \neq 0$ for all $t$ in $[-1,1]$. 
There exists a positive constant $C_{\phi}$, depending only on $\phi$, such that for all $R \in \N$ and all balls $B$ of diameter at least $C_{\phi} R^\mydegree$ in $\R^2$, we have the inequality 
\begin{equation}\label{inequality:real:2D}
\| E_{[-1/2,1/2]} f \|_{L^4(B)} 
\leq 5\sqrt[4]{\mydegree} \; 
\| S_{\partition_{R^{-1}}([-1/2,1/2])} f \|_{L^4(B)} 
.\end{equation}
\end{theorem}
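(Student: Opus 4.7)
The plan is to reduce \eqref{inequality:real:2D} to a near-orthogonality estimate for the ``product pieces'' $E_{J_1} f \cdot \overline{E_{J_2} f}$ in $L^2(B)$, and then to establish the near-orthogonality by a counting argument in elementary symmetric coordinates. Setting $\mathcal{P} := \partition_{R^{-1}}([-1/2, 1/2])$ and using the identities
\[
|E_{[-1/2,1/2]} f|^2 = \sum_{J_1, J_2 \in \mathcal{P}} E_{J_1} f \cdot \overline{E_{J_2} f}, \qquad |S_{\mathcal{P}} f|^2 = \sum_{J \in \mathcal{P}} |E_J f|^2,
\]
together with $\|u\|_{L^4}^4 = \|\,|u|^2\,\|_{L^2}^2$, the fourth power of \eqref{inequality:real:2D} is equivalent to the Bessel-type bound
\[
\Big\| \sum_{J_1, J_2 \in \mathcal{P}} E_{J_1} f \cdot \overline{E_{J_2} f} \Big\|_{L^2(B)}^2 \leq 625\,\mydegree \sum_{J_1, J_2 \in \mathcal{P}} \| E_{J_1} f \cdot \overline{E_{J_2} f} \|_{L^2(B)}^2,
\]
a near-orthogonality statement with multiplicity constant $625\,\mydegree$.

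Next I would expand the left-hand side as a fourfold sum over $(J_1, J_2, J_3, J_4) \in \mathcal{P}^4$ and integrate in $\vof{x}$ first; each term becomes an oscillatory integral of $f(\xi_1)\overline{f(\xi_2)} f(\xi_3) \overline{f(\xi_4)}$ against $\FT{\mathbf{1}_B}(\gamma(\xi_2) + \gamma(\xi_4) - \gamma(\xi_1) - \gamma(\xi_3))$ over $J_1 \times J_2 \times J_3 \times J_4$. Since $B$ has diameter at least $C_\phi R^\mydegree$, this kernel is concentrated on frequencies of magnitude $\lesssim (C_\phi R^\mydegree)^{-1}$, so only configurations satisfying
\[ \xi_1 + \xi_3 \approx \xi_2 + \xi_4 \quad\text{and}\quad \phi(\xi_1) + \phi(\xi_3) \approx \phi(\xi_2) + \phi(\xi_4) \]
can contribute non-negligibly. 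The exactly diagonal quadruples with $\{J_1, J_3\} = \{J_2, J_4\}$ already yield $2\|S_{\mathcal{P}} f\|_{L^4(B)}^4$ through the two ways of pairing up intervals, accounting for a factor of $2$ in the final multiplicity.

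To control the remaining off-diagonal configurations I would pass to the elementary symmetric coordinates $s := \xi_1 + \xi_3$, $p := \xi_1 \xi_3$, $s' := \xi_2 + \xi_4$, $p' := \xi_2 \xi_4$. The first approximate constraint fixes $s = s'$, while the second becomes $\Phi(s, p) = \Phi(s, p')$ where $\Phi(s, p) := \phi(\alpha) + \phi(\beta)$ for $\alpha, \beta$ the roots of $X^2 - sX + p$. When $\phi(T) = T^\mydegree$, Newton's identities express $\Phi(s, \cdot)$ as a polynomial of degree $\lfloor \mydegree / 2 \rfloor$ in $p$, so the equation has at most $\lfloor \mydegree/2 \rfloor$ solutions in $p$; for general $\phi \in C^\mydegree$ with $|\partial^\mydegree \phi| > 0$ on $[-1,1]$ (and hence a uniform lower bound by compactness), the same count should follow from iterated application of Rolle's Theorem to $\Phi'(\alpha) = \phi'(\alpha) - \phi'(s - \alpha)$ combined with a Taylor remainder controlled by the tolerance $(C_\phi R^\mydegree)^{-1}$. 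Consequently for each $\{J_2, J_4\}$ at most $O(\mydegree)$ unordered pairs $\{J_1, J_3\}$ give a non-negligible contribution, which is precisely the $O(\mydegree)$ multiplicity needed.

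The main obstacle is to make the off-diagonal count genuinely quantitative at the right scale: the slowly-decaying tails of $\FT{\mathbf{1}_B}$ (typically absorbed by dominating $\mathbf{1}_B$ with a Schwartz bump of comparable scale) and the Taylor remainder of $\Phi(s, \cdot)$ must be matched to the spacing of the $O(\mydegree)$ algebraic branches of $\Phi(s, \cdot) = \mathrm{const}$, so that approximate solutions cluster into $O(\mydegree)$ near-diagonal families of $\{J_1, J_3\}$ per $\{J_2, J_4\}$. It is here that the constant $C_\phi$ is calibrated in terms of the uniform lower bound on $|\partial^\mydegree \phi|$, and the ensuing bookkeeping delivers both the factor $\mydegree$ and the prefactor $625$.
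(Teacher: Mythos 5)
Your overall skeleton (expand the fourth power, use the Fourier support of a bump adapted to $B$ to restrict to approximate solutions of $\xi_1+\xi_3\approx\xi_2+\xi_4$, $\phi(\xi_1)+\phi(\xi_3)\approx\phi(\xi_2)+\phi(\xi_4)$, then count) matches the paper's strategy, but your central multiplicity claim is false precisely in the new, degenerate cases $\mydegree\geq 3$. You assert that for each fixed pair $\{J_2,J_4\}$ at most $O(\mydegree)$ pairs $\{J_1,J_3\}$ can interact. Take $\phi(T)=T^3$: if $J_4$ is (one of the $O(1)$ intervals) opposite $-J_2$, so that $s'=\xi_2+\xi_4$ can be within the tolerance of $0$, then \emph{every} pair with $\xi_3\approx-\xi_1$ interacts with $\{J_2,J_4\}$ — about $R$ pairs, not $O(\mydegree)$ — because on $s\approx s'\approx 0$ both constraints are automatically satisfied. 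In your symmetric coordinates this is exactly the degeneration of the Newton--Girard expression: for fixed $s$ the leading coefficient of $\Phi(s,\cdot)$ in $p$ (namely $-3s$ when $\mydegree=3$) vanishes at $s=0$, so ``at most $\lfloor\mydegree/2\rfloor$ roots in $p$'' fails on the special subvariety $x_1=-x_2$, $y_1=-y_2$ highlighted in the introduction. Hence the interaction graph between pairs $\{J_1,J_3\}$ and $\{J_2,J_4\}$ has unbounded degree, and the step ``bounded multiplicity $\Rightarrow$ Bessel-type near-orthogonality'' collapses; this is not a matter of calibrating tails and Taylor remainders, since already the count of exact solutions is unbounded in your slicing.

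The paper circumvents this by counting with a mixed slicing: it fixes one interval from each side of the equation, $I_1\ni t_1$ and $I_1'\ni t_1'$ with $\dist(I_1,I_1')\geq R^{-1}$, and shows (Propositions~\ref{prop:KdV} and \ref{prop:KdV:real}) via the second order differencing identity $\phi(t_1+t_2-t_1')-\phi(t_1)-\phi(t_2)+\phi(t_1')=(t_1-t_1')(t_2-t_1')\psi(t_1,t_2,t_1')$, together with the iterated Rolle argument of Lemma~\ref{lemma:non-deg_implies_definite_SOD} in the $C^{\mydegree}$ case, that there are at most $625(\mydegree-1)$ admissible pairs $(I_2,I_2')$; on the cubic's special subvariety, fixing $t_1$ and $t_1'$ pins down $t_2\approx-t_1$ and $t_2'\approx-t_1'$, which is why this count stays $O(\mydegree)$. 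One then cannot simply invoke a Schur-type bound on your pair-versus-pair graph: the deduction in Section~\ref{section:CauchySchwarz} uses a double Cauchy--Schwarz argument exploiting the symmetry $(I_2,I_2')\in\mathcal{S}(I_1,I_1')$ if and only if $(I_1,I_1')\in\mathcal{S}(I_2,I_2')$. Your quantitative concerns (separating root clusters, replacing $\mathbf{1}_B$ by a bump, Taylor remainders) are handled in the paper by pulling out the two factors of size at least $R^{-1}$ and, in the polynomial case, taking an infimum over the rootless factor by compactness; but these refinements cannot rescue the proposal as written — the count must first be reorganized around a mixed pair $(I_1,I_1')$ before the rest of your argument can go through.
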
 

The case $\mydegree=2$ of Theorem~\ref{theorem:real:2D} corresponding to non-degenerate planar curves is attributed to Fefferman \cite{Fefferman} by \cite{GGPRY}. For the \(k=2\) case, the proofs in \cite{Fefferman, GGPRY} use very different methods from each other and the one employed in our paper, but the heart of the matter in all proofs is the underlying diagonal behavior of the system.\footnote{
We note that the square functions in \cite{Fefferman} are those related to Bochner--Riesz operators which differ from the square functions in \cite{GGPRY}. The two types of square functions are closely related, and experts can readily translate the proof in \cite{Fefferman} for Bochner--Riesz operators to one for the square functions herein when \(k=2\).
} 
Theorem~\ref{theorem:real:2D} extends the two-dimensional results of \cite{GGPRY} to sufficiently smooth, not completely flat, real planar curves with vanishing curvature. A good example here is the curve $(X,X^3)$ whose curvature vanishes at the origin. 

While some version of Theorem~\ref{theorem:real:2D} may be known (e.g., for square functions associated to Bochner--Riesz means along the moment curve in three or more dimensions as in \cite{P1, P2}, mollifications by arclength measure, or containing an \(\epsilon\)-loss), we have not found this result in the literature. We also emphasize that our results do not harbor an \(\epsilon\)-loss\footnote{An \(\epsilon\)-loss for \eqref{inequality:real:2D} would be an additional factor of \(C_\epsilon R^\epsilon\) where \(C_\epsilon\) depends on \(\epsilon>0\).}, which differs crucially from related endpoint restriction results for planar curves. Furthermore, we extend Theorem~\ref{theorem:real:2D} in the following ways, each of which is new:
\begin{itemize}
\item for polynomial curves, Theorem~\ref{theorem:main:2D} obtains a version uniform over generic local fields;
\item when \(\mydegree=2\), Theorem~\ref{theorem:convex} weakens the \(C^2[-1/2,1/2]\) assumption to strictly convex or strictly concave;
\item Theorem~\ref{theorem:complex} obtains a version of our result for complex analytic curves. 
\end{itemize}
We defer the statements of each theorem to subsequent sections. 

Of independent interest is our systematic analysis of the second order differencing polynomial (defined in \eqref{def:SOD}) which lies at the heart of our approach. This removes any need for Kakeya-type information. We have organized our paper to make transparent its universal role across local fields and to illuminate its relevant properties in each setting. 
We pay attention to the second order differencing polynomial for non-degenerate planar curves and examine its geometry in local fields. Interestingly, there is a stark difference in its geometry between Archimedean and non-Archimedean fields. This appears to reflect the different properties of derivatives in these types of local fields.

% % 
\subsection{Overview of our approach: The Second Order Differencing Method}
% % 

Initially, our goal was to extend \cite{GGPRY} to arbitrary local fields. The critical ingredient in their approach was their establishment of `diagonal behavior' in near-solutions to the underlying system of equations. Unfortunately, one cannot adapt the techniques of \cite{GGPRY} to non-Archimedean local fields because of their use of the Mean Value Theorem in the proof of their Lemma~4.3, which feeds into the diagonal behavior fundamental to their approach. The Mean Value Theorem is an inherently real phenomenon that fails in non-Archimedean local fields. Consequently, a different approach must be found. 

In order to recover, one falls back to considering the prototypical non-degenerate (meaning without vanishing torsion) real curve which is the moment curve. Fortunately, the moment curve is well-defined over generic fields, and in this case Proposition~4.1 of \cite{GGPRY} follows from classical properties of Vandermonde matrices. The analogue of Proposition~4.2 of \cite{GGPRY}, however, remains mysterious. Diagonal behavior for the moment curve is well-known in the circle method, which gives some hope of proving the analogous diagonal behavior encoded by Proposition~1.3 of \cite{GGPRY}. 
In practice, for the moment curve over generic local fields, one can circumvent Proposition~4.2 of \cite{GGPRY} and simply prove the required diagonal behavior using the Newton--Girard equations, after which boundedness of the square function analogous to \eqref{inequality:real:2D} follows. 

For more on the approach alluded to above for the moment curve over generic local fields, see \cite{H}; specifically, see Theorem~1 therein for a precise bound and Proposition~6 for the associated diagonal behavior. 
A better upper bound for the analogue of Theorem~\ref{theorem:main:2D} for the moment curve in generic local fields was proved in \cite{WH}. 
The upper bound of \cite{WH} turns out to be sharp as proved in \cite{H}; see Theorem~2 in the latter reference. Both papers crucially rely on diagonal behavior for the moment curve. 
We also mention that only the moment curve is treated in \cite{WH}. Over non-Archimedean local fields, it appears to us that the notion of torsion is rather subtle because of its reliance on derivatives. In turn, this makes it difficult to define non-degenerate curves over generic local fields and establish a general result for them such as \cite{GGPRY} furnished for real curves. This is explored in Section~\ref{section:Rolle}.

With the moment curve in hand, we turned our attention to degenerate (meaning with vanishing torsion), finite-type (meaning some derivative is non-vanishing) curves. For finite-type curves, prior experience with the curve \((X,X^3)\) revealed another obstacle: \emph{there are off-diagonal solutions to the underlying systems of equations}. Similarly, there are off-diagonal near-solutions to the underlying systems of approximate equations. Our first task was to understand the structure of off-diagonal, near-solutions in each local field. In turn, this led us to Wooley's Second Order Differencing Method, which we review in a moment, and to the mantra of `special subvarieties' which captures the structure we studied. This method is the main technique in our paper and its use appears to be novel in these problems. 

The second obstacle was that the approach of \cite{GGPRY} was only capable of handling diagonal behavior; see the paragraph following (1.11) on page 7080 of \cite{GGPRY} where they emphasize this point. To be precise, their reduction to near-solutions of systems of equations relied on having essentially only diagonal behavior. After understanding the structure of off-diagonal near-solutions, the reduction from square function estimates to special subvarieties required some finesse utilizing the symmetry of the underlying systems of equations.

Our main observation is the fact that special subvarieties play a role for square-function estimates on curves of finite-type. From the point of view of Diophantine equations, this indicates that rather than measuring `strong diagonal behavior', square function estimates should be viewed as a sign that the number of rational points on the underlying variety accumulates on a finite number of special subvarieties of the appropriate dimension. Here, the set of diagonal solutions is the archetypical example of such a subvariety, but there may be more. The easiest example of this is the curve $\gamma(T)=(T,T^3)$, where the square function estimate is connected to the problem of counting solutions to the pair of equations  
\begin{align}
\begin{split}\label{eq:DS:KdV}
x_1^3 + x_2^3 &= y_1^3+y_2^3, 
\\ x_1+x_2 &= y_1+y_2.
\end{split}
\end{align}
It is not hard to see that beyond the `strongly diagonal' solutions $\{x_1, x_2\}=\{y_1,y_2\}$ there are also solutions of the shape $x_1=-x_2$, $y_1=-y_2$.

Our main tool for capturing such special subvarieties is Wooley's \emph{Second Order Differencing Method}; we refer to this as `Wooley's S.O.D.ing method' for short. Suppose that $K$ is a field. For any polynomial $\phi \in \lf[X]$ we define the associated second order differencing polynomial $\psi_{\phi}$  via the relation 
\begin{equation}\label{def:SOD}
\phi(X+Y-Z)-\phi(X)-\phi(Y)+\phi(Z) = (X-Z)(Y-Z)\psi_{\phi}(X,Y,Z) 
.\end{equation}
In this factorization, the factors $X-Z$ and $Y-Z$ correspond to the two planes $x_1=y_1$ and $x_2=y_1$, respectively, which give the strong diagonal solutions. Meanwhile, off-diagonal solutions correspond to zeros of the function $\psi_{\phi}$.
We will typically refer to the trivariate polynomial $\psi_{\phi} \in K[X,Y,Z]$ as the `S.O.D.ing polynomial' associated with $\phi$. 
When the context is clear, we will drop the subscript \(\phi\) and  write \(\psi\) in place of \(\psi_{\phi}\). 

The existence of an identity of the shape given in \eqref{def:SOD} is classical when $\phi(X)=X^3$, in which case $\psi_\phi(X,Y,Z)=3(X+Y)$, and it has left its mark in arguments in analytic number theory at least since the work of Hua \cite{Hua1965} on Waring's problem\footnote{See Chapter~5; specifically, p.40 for the definition of \(S_k\) and Theorem~8, followed by Lemma~5.2 on p.43 which establishes a sixth moment estimate for \(S_3\), and the bottom of p.47 which establishes the desired tenth moment estimate.} and in analysis since at least the work of Bourgain \cite{Bourgain:KdV} on the periodic KdV equation\footnote{See eq.(8.15) and how it is immediately used in Lemma~8.16. A similar technique is used in eqs.(8.37)--(8.44) on pp.227--228.}. 
Beyond the case of $X^3$, S.O.D.ing polynomials and their analysis have been used to great effect in the work of Wooley: see in particular pages~158--159 in \cite{Wooley:simultaneous} as well as subsequent works \cite{Wooley:symmetric,VW:nonary,PW}. 
Recently, the method was introduced and exploited in harmonic analysis in \cite{HW} to prove an \(\ell^2(\Z) \to L^{10}(\T^2)\) discrete adjoint restriction estimate for the curve \((X,X^3)\). In this example, the corresponding putative decoupling inequality in the hierarchy is known to fail, and this result shows that the hierarchy discussed above is not always applicable. 

Wooley's S.O.D.ing method was developed in \cite{DHV} to prove new sharp discrete \(\ell^p\)-improving estimates for curves of the form considered in this paper. 
This paper further develops Wooley's S.O.D.ing method by analyzing approximate versions of the system of equations in \eqref{eq:DS:KdV} and its generalizations. This is the key ingredient in all of our proofs. 
We complete our study by exploring the limitations of this method and contrasting its implementation over various local fields.

% % 
\subsection{Outline of the paper}
% % 

In the propreantepenultimate section (Section~\ref{section:setup}) we set several notations and conventions used for local fields. 
In the preantepenultimate section (Section~\ref{section:main_prop}), we introduce the main proposition used in proving our results; this proposition relates approximate versions of \eqref{eq:DS:KdV} to special subvarieties associated to the curve. 
In the antepenultimate section (Section~\ref{section:CauchySchwarz}), we use our main proposition to prove our main theorem, Theorem~\ref{theorem:main:2D}. 
In the penultimate section (Section~\ref{section:real}), we prove Theorem~\ref{theorem:real:2D}. 
In the ultimate section (Section~\ref{section:Rolle}), we discuss the $p$-adic geometry of second order differencing polynomials.

% % 
\subsection*{Acknowledgements}
% % 

The third author thanks Trevor Wooley for introducing him to the method and for providing valuable references. 
The third author also thanks Jim Wright for inspiring his interest in $p$-adic harmonic analysis during 2013 and 2014 and sharing his contributions to the area.

% % 
\subsection*{Declarations}
% % 

\emph{Funding}. 
During the production of this manuscript, the first author was supported by postdoctoral grant no.~2018.0362 of the Knut and Alice Wallenberg Foundation, in the name of the second author. In addition, the second author held a Starting Grant of the Swedish Science Foundation (Vetenskapsr{\aa}det) under grant agreement no.~2017-05110, and in the later stages a Project Grant (no.~2022-03717) by the same funder. We are grateful to both bodies for allowing the third author to visit Gothenburg in February 2020 and in November 2021, as well as to the department for mathematical sciences at Chalmers University of Technology and the University of Gothenburg for its hospitality. 
The third author was supported by the Additional Funding Programme for Mathematical Sciences, delivered by EPSRC (EP/V521917/1) and the Heilbronn Institute for Mathematical Research.
Part of the work was undertaken while the authors were participating in the Harmonic Analysis and Analytic Number Theory Trimester Program at the Hausdorff Institute of Mathematics in the summer of 2021, whose hospitality is also gratefully acknowledged.

\emph{Competing interests}. 
The authors have no relevant financial or non-financial interests to disclose.

% % % 
\section{Set-up for local fields}\label{section:setup}
% % % 

In this paper, we let $\lf$ denote a (one-dimensional) local field: in other words, a locally compact topological field which is complete with respect to a non-discrete topology. 
We recall only what we need for this work and broadly refer the interested reader to Chapters I, II and XV of \cite{CasselsFroehlich} for more information about local fields and proofs of various facts that we recall. 

The topology of $\lf$ is induced by the metric associated to an absolute value $|\cdot|_\lf : \lf \to \R_{\geq0}$. When $\lf$ is fixed we will suppress the dependence on $K$ in our notation for its absolute value. We extend the metric to $\lf^2$ by defining $|(x_1,x_2)|_{\lf} = \max\{|x_1|,|x_2|\}$ for $(x_1,x_2) \in \lf^2$. So, geometrically all of our `balls' are `squares', and we use these terms interchangeably. 
Similarly, we refer to the diameter of a ball or square $B$ in $\lf^2$ as $\sup_{x,y \in B} |x-y|$. This notion of diameter coincides with `side-length' when the local field is Archimedean and `radius' when the local field is non-Archimedean. 

Local fields come in two flavors: each one is either a completion of a finite extension of $\Q$ with respect to a non-trivial metric given by a absolute value, or it can be realized as the set of formal Laurent series over a finite field. 
Fields of the former flavor come in two types. The first type are typically referred to as \emph{Archimedean} and are isomorphic to a finite field extension of $\R$. The second type are called \emph{non-Archimedean} and correspond to finite field extensions of $\Q_p$ for a rational prime $p$. Note that the only possible Archimedean local fields are $\R$ and $\C$, whereas non-Archimedean fields can have arbitrary degree. 

Each field of formal Laurent series over a finite field has infinitely many non-Archimedean valuations for which it is complete with respect to the induced metric. In other words, every field of formal Laurent series over a finite field is non-Archimedean. When discussing such a field of formal Laurent series, we implicitly choose one of its non-Archimedean valuations. Our results are uniform over any choice of valuation.

For a field extension $L/K$, we write $[L:K]$ for its degree. For each field $\lf$, we will work with the ring of univariate polynomials $\lf[X]$ and the ring of univariate power series $\lf[[X]]$. The latter is formally defined as the set of all infinite power series of the form $a_0+a_1X+a_2X^2+\dots$ such that $a_0,a_1,a_2,\dots \in \lf$. Multiplication and addition of power series follows the rules of multiplication and addition of polynomials. 

Over each field, we have the differentiation operator $\partial: K[[X]] \to K[[X]]$ defined formally by its action on a power series as 
\[
\partial: \sum_{i=0}^{\infty} a_i X^i \mapsto \sum_{i=0}^{\infty} (i+1) a_{i+1} X^i.
\]
In this work, we will be mostly concerned with the case in which $\phi$ is a polynomial over $\lf[X]$, although occasionally we also consider more general functions $\phi$ over $\R$, in which case we have an associated Taylor series in $\R[[X]]$. For a formal power series $\phi \in \lf[[X]]$ we also write $\partial \phi = \phi'$, $\partial \partial \phi = \phi''$ and so on, as is standard practice over $\R$. In a few instances we will need to be precise with respect to which  variable we are differentiating, this will be indicated by $\partial_{\#}$, where $\#$ denotes the variable or coordinate with respect to which we differentiate. For instance, for a trivariate function $\psi(X,Y,Z)$, we write $\partial_Y \psi(X,Y,Z)$ for the partial derivative in the $Y$-variable.

% % 
\subsection{Non-Archimedean local fields}
% % 

For a non-Archimedean local field $\lf$, let $\roi := \{ x \in \lf : |x|_{\lf} \leq 1 \}$ denote its ring of integers and $\oddprime := \{ x \in \lf : |x|_{\lf} < 1 \}$ denote the unique maximal ideal of $\roi$. The residue field of $\lf$ is $\roi/\oddprime$, which is a finite field. Let $p = \characteristic (\roi/\oddprime)$. 
The image of $|\cdot|_{\lf}$ is cyclic and $\lf$ comes with a uniformizing element, say $\uniformizer \in \lf$, generating this group. 
We assume that the metric on the non-Archimedean local field $\lf$ is normalized such that $|\uniformizer|=p^{-1}$. 
For a scale $R$ in $\scales(\lf) := \{ p^s : s \in \Z_{\geq0} \}$, let $\partition_{R^{-1}}(\lf)$ denote a partition of $\roi$ into closed (and hence also open) balls of diameter $R^{-1}$. Observe that $\#\partition_{\delta}(\lf)$ is finite for each $\delta>0$. 

We define the additive character on a non-Archimedean local field $K$ as follows. Consider first the case when $K=\Q_p$ for some rational prime $p$. Any $x \in \Q_p$ can be written in the form $x=p^v a/b$ for some units $a,b \in \Z_p \setminus p \Z_p$ and an integer $v$. In this notation, we define $\eof{x}:=e^{-2\pi ip^v a}$, so that in particular we have $\eof{x}=1$ for all $x \in \Z_p$. For a general non-Archimedean field $K$ arising as a finite extension of $\Q_p$, we define the character $\eof{\cdot}$ on $\lf$ by $\eof{x}:=\eof{\trace{\lf/\Q_p}{x}}$, where the latter character is defined on $\Q_p$. 
The Haar measure $\dd{\xi}$ on $\lf$ is normalized so that the measure of $\roi$ is 1. 

Consider next non-Archimedean fields $K=\mathbb F_q((T))$ which arise as the function field over a finite field $\mathbb{F}_q$ of characteristic $p$, where $p$, as usual, is a finite rational prime. 
The character on $\lf$ is defined for a Laurent series 
\[
\alpha=\alpha(T):=\sum_{n=N}^\infty a_n T^n
\]
by setting $\eof{\alpha} := e_q(a_{-1})$, where $e_q$ is a fixed additive character on $\mathbb{F}_q$. 
As before, the Haar measure $\dd{\xi}$ on $\lf$ is normalized so that the measure of $\roi$ is 1.

% % 
\subsection{Archimedean local fields}
% % 

For an Archimedean field $\lf$, either $\lf$ is $\R$ or $\C$ in which case we have the absolute values $|\cdot|_\R$ on $\R$ with its standard Archimedean metric and $|z|_\C := \max\{|x|_\R,|y|_\R\}$ for $z=x+iy \in \C$. For either of these fields, we proceed by defining $\roi := \{ x \in K : |x|_{K} \leq 1/2 \}$. In these cases, $\roi$ is not the ring of integers, but instead the closure of a fundamental domain for the quotient of $\lf$ by its ring of integers (which are the usual integers in $\R$ and the Gaussian integers $\Z[i]$ in $\C$). In contrast, \cite{GGPRY} chose the interval $[0,1]$ in $\R$. Our choice here is motivated by our desire to wholly include points with vanishing torsion at the origin, but the two choices are equivalent by translation. 

We set $\scales(\R)=\scales(\C)=\mathbb N$.  For $R \in \scales(\R)$, define the partitions 
\[
\partition_{R^{-1}}(\R) 
:= \bigcup_{j=0}^{R-1}\left[-\frac12+\frac{j}{R},-\frac12+\frac{j+1}{R}\right) 
.\]
Similarly, for $R \in \scales(\C)$ we define  
\begin{align*}
\partition_{R^{-1}}(\C) 
&:= \bigcup_{j,k=0}^{R-1} \left\{\left[-\frac12 +\frac{j}{R},-\frac12+\frac{j+1}{R}\right) + i\left[-\frac12+\frac{k}{R},-\frac12+\frac{k+1}{R}\right)  \right\} 
.\end{align*}
We stress that while our partitions are defined over half-open, half-closed sets, our finite type conditions are assumed over compact sets $\roi$. The edges not considered in our partition form a zero measure set in the integrals and are unimportant in their estimation. 

On $\R$ the additive character is given by $\eof{t} := e^{-2\pi it}$. On $\C$, we extend this definition as above by defining $\eof{z} := \eof{\trace{\C/\R}{z}/2} = \eof{ \rp z}$. In either case, we write $\dd{x}$ for the ordinary Lebesgue measure.

% % 
\subsection{Smooth localization}
% % 

For any function $f \in L^1(K)$ where $\lf$ is a local field, its Fourier transform is defined as 
\[ 
\widehat{f}(\xi)
=
\int_{K}f(x)\eof{x\xi}\dd{x} 
.\] 
Suppose that $w:\R \to \R$ is a Schwartz function which is non-negative, at least 1 on the interval $[-1,1]$, and for which $\widehat{w}$ is supported on $[-1,1]$. Such a function is not difficult to construct; see for instance the discussion following (6.1) in \cite{GGPRY}. This function can be extended to $\C$ by putting $w(x+iy)=w(x)w(y)$. Meanwhile, on a non-Archimedean field $\lf$, we take $w:\lf \to \lf$ to be the indicator function on $\roi$. In each case, we define the two-dimensional Schwartz function $W: K^2 \to \R_{\geq0}$ as $W(x_1,x_2) := w(x_1)w(x_2)$. Thus, $W(x_1,x_2) \geq 1$ on the square $|(x_1,x_2)|\leq 1$, and its Fourier transform $\widehat{W}$ is supported on the square $|(x_1,x_2)|\leq 1$. In particular, when $K$ is a non-Archimedean field, both $W$ and its Fourier inverse $\widehat{W}$ are simply the indicator function on $\roi^2$. 

In each local field we define, for a ball $B$ in $\lf^2$ of diameter $R>0$ and center $\vof{c}$, the function 
\[ W_B(\vof{x}) := W\big(\frac{\vof{x}-\vof{c}}{R}\big) \]
for all $\vof{x} \in \lf^2$. This localizes $W$ to the square $B$ so that its Fourier transform is supported in a ball of diameter $R^{-1}$ about the origin. 
If $B$ is a square of diameter $R>0$ with center the origin in $\lf^2$, then 
\begin{equation}\label{uncertainty}
\int_{\lf^2} \eof{\vof{y}\cdot\vof{x}} W_{B}(\vof{x}) \dd{\vof{x}} 
= 
R^{2[\lf:F]} \widehat{W}(R\vof{y})
.\end{equation}
\noindent 
The power of $2[\lf:F]$ stems from the fact that we are considering a 2-dimensional integral over the field $K$, which in turn can be viewed as a vector space of dimension $[\lf:F]$ over its ground field $F$, where $F=\R, \Q_p$ or $\F_p((T))$ depending on whether $K$ is an Archimedean, a $\oddprime$-adic, or a function field. 

\begin{remark}
Over $\R$ and $\C$, our results in Section~\ref{section:real} and Theorem~\ref{theorem:real:2D} are stated for $L^p(B)$-norms where $B$ is a ball in $\R^2$. There are standard techniques to deduce square function inequalities for the $L^p(B)$-norm from those for the $L^p(W_B)$. Since these techniques are standard, we prove only the latter. The reader may consult the discussion surrounding (6.1) of \cite{GGPRY} for more information on such techniques. 
\end{remark}

% % 
\subsection{Statement of the main theorem}\label{section:maintheorem}
% % 

Let $\gamma(T) := (T,\phi(T))$ denote a curve, where $\phi$ is a continuous function on $\lf$. For each measurable set $I$ in $\lf$, define the \emph{extension operator on $\gamma$ over $I$} via its action on a continuous, compactly supported function $f$ by putting
\[
E_{I}f(\vof{x}) 
:= 
\int_{I} f(\xi) \eof{\gamma(\xi) \cdot \vof{x}} \dd{\xi}.
\] 
For a measurable set $U$ in $\roi$, we define the \emph{square function over $U$ at scale $R \geq 1$} as 
\[
S_{\partition_{R^{-1}}(U)} f(\vof{x}) 
:= 
\big( \sum_{J \in \partition_{R^{-1}}(U)} |E_{J}f(\vof{x})|^2 \big)^{1/2}
.\]

Our main theorem is the following. 
\begin{theorem}\label{theorem:main:2D}
Let $\lf$ be a local field. 
Suppose that $\gamma(T)$ is a curve of the form $(T,\phi(T))$ where $\phi(T) \in \lf[T]$ has degree $k \geq 2$ 
 not divisible by the characteristic of $\lf$. 
There exists a positive constant $C_{\phi}$ such that for all $R \in \scales(\lf)$ and all balls $B$ of diameter at least $C_{\phi} R^k$ in $\lf^2$, we have the inequality  
\begin{equation}\label{inequality:main:2D}
\| E_{\roi} f \|_{L^4(W_B)} 
\leq 5\sqrt[4]{\deg{\phi}} \; 
\| S_{\partition_{R^{-1}}(\roi)} f \|_{L^4(W_B)} 
.\end{equation}  
\end{theorem}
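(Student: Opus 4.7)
The plan is to expand the $L^4$-norm and exploit the S.O.D.\ identity \eqref{def:SOD} via the main proposition from Section~\ref{section:main_prop}. Decomposing $E_{\roi}f = \sum_{J \in \partition_{R^{-1}}(\roi)} E_J f$, I would write
\[
\|E_{\roi}f\|_{L^4(W_B)}^4
=
\sum_{J_1,J_2,I_1,I_2}
\int_{\lf^2} E_{J_1}f(\vof{x}) E_{J_2}f(\vof{x}) \overline{E_{I_1}f(\vof{x}) E_{I_2}f(\vof{x})}\, W_B(\vof{x})\, \dd{\vof{x}}.
\]
Unfolding the definition of each $E_J f$ and using the formula \eqref{uncertainty} for the Fourier transform of $W_B$ converts each summand into an integral of $f(\xi_1)f(\xi_2)\overline{f(\eta_1)f(\eta_2)}$ over $J_1\times J_2\times I_1\times I_2$, weighted by a rescaled copy of $\widehat{W}$ evaluated at the pair
\[
\big(\xi_1+\xi_2-\eta_1-\eta_2,\ \phi(\xi_1)+\phi(\xi_2)-\phi(\eta_1)-\phi(\eta_2)\big).
\]
Since $B$ has diameter at least $C_\phi R^k$, the support of this weight restricts attention to ``approximate solutions'' of the system \eqref{eq:DS} in which both components are of size $\lesssim R^{-k}$.

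Next, I would apply the main proposition of Section~\ref{section:main_prop} to each such summand. Substituting $\xi_2 \approx \eta_1+\eta_2-\xi_1$ from the first near-equation into the second and invoking \eqref{def:SOD} rewrites it as
\[
(\xi_1-\eta_1)(\xi_1-\eta_2)\,\psi_\phi(\xi_1,\eta_1,\eta_2) = O(R^{-k}).
\]
Thus every contributing quadruple $(\xi_1,\xi_2,\eta_1,\eta_2)$ lies near the union of the three varieties $\xi_1 = \eta_1$, $\xi_1 = \eta_2$, and $\psi_\phi(\xi_1,\eta_1,\eta_2) = 0$. Because the interval scale $R^{-1}$ strictly dominates the approximation scale $R^{-k}$ (using $k\geq 2$ and $C_\phi$ taken sufficiently large), the first two, ``trivial,'' branches force the unordered pair $\{J_1,J_2\}$ to coincide with $\{I_1,I_2\}$, and their contribution reassembles into a controlled multiple of $\|S_{\partition_{R^{-1}}(\roi)}f\|_{L^4(W_B)}^4$.

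The third, non-trivial branch requires a genuine geometric input: one must show that for each fixed pair $(I_1,I_2)$, the number of partner pairs $(J_1,J_2)$ admitting approximate zeros of $\psi_\phi$ at scale $R^{-k}$ is bounded in terms of $\deg\psi_\phi \leq k-2$. A Cauchy--Schwarz application in these interval indices (the content of Section~\ref{section:CauchySchwarz}) then dominates the non-diagonal contribution by a constant multiple of $\deg(\phi)\cdot\|S_{\partition_{R^{-1}}(\roi)}f\|_{L^4(W_B)}^4$; summing with the trivial branch, taking a fourth root, and tracking constants yields the factor $5\sqrt[4]{\deg\phi}$. The main obstacle I anticipate is this last geometric/arithmetic step, carried out uniformly over Archimedean, $\oddprime$-adic, and function-field settings: one needs a quantitative Bezout or Rolle-type statement for the S.O.D.\ variety $\psi_\phi = 0$ that is valid in every local field. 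This is precisely where the hypothesis that $\characteristic(\lf)\nmid\deg\phi$ enters, ensuring that $\psi_\phi$ retains its expected degree and leading structure; the uniform geometric control is the subject of Section~\ref{section:Rolle}.
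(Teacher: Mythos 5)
Your proposal follows essentially the same route as the paper: expand the $L^4$-norm over quadruples of intervals, use the support of $\widehat{W_B}$ (via \eqref{uncertainty}) to restrict to approximate solutions of \eqref{eq:DS}, substitute the linear relation and factor through the S.O.D.\ identity \eqref{def:SOD}, invoke Proposition~\ref{prop:KdV} for the bounded count of partner pairs on the $\psi_\phi$-branch, and reassemble everything with Cauchy--Schwarz in the interval indices to get the constant $5\sqrt[4]{\deg\phi}$. Only minor slips: the diagonal branches force the interval pairs to agree up to a bounded number of neighbours rather than exactly, and the uniform-in-$\lf$ root-counting you flag as the main obstacle is carried out in Section~\ref{section:main_prop} (not Section~\ref{section:Rolle}), where the factor without $\lf$-rational roots is handled by compactness of $\roi$ and absorbed into $C_\phi$.
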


An instructive example for this theorem is provided by the class of monomial curves $(X,X^k)$ for $k \geq 2$. In every local field, these have a flat point at the origin when $k \geq 3$, and thus the argument of \cite{GGPRY} fails in these instances. However, the origin is not infinitely flat, and this is fundamental to obtain our theorem. 
For monomial curves, and more generally any curve such that $\phi \in \Z[T]$, a pleasant feature of Theorem~\ref{theorem:main:2D} is that the bounds therein are uniform over any embedding of the curve in $\R$, $\C$ or a non-Archimedean field $\lf$. 

\begin{remark}
The condition that the degree is not divisible by the characteristic of the field is necessary. To see this, consider for instance the curve $(X,X^3)$ over a function field of characteristic 3, where we find that $(X+Y-Z)^3-(X^3+Y^3-Z^3)=0$, so that the associated S.O.D.ing polynomial $\psi(X,Y,Z)$ vanishes identically. Similar degenerate cases arise for each finite characteristic from the linearity of the Frobenius map.
\end{remark}

\begin{remark}
A careful inspection of the proof reveals that the factor $5$ can be replaced by $\sqrt 5$ when $K=\R$, and by $1$ when $K$ is non-Archimedean.
\end{remark}

\begin{remark}
The proof of the theorem generalizes to handle curves $\gamma(T) = (T,\phi_2(T),\dots,\phi_r(T))$ on $L^4(B)$ where $B$ is a ball of diameter at least $R^k$ in $\lf$, provided that the set $\{1, T, \phi_i(T)\}$ is linearly independent for some $i$; here $k$ is the degree of $\phi_i$. The reason for this is that while these curves may lack torsion, the curves have torsion when projected onto the two-dimensional subspace spanned by the first and $i^{\textrm{th}}$ coordinates. This can be seen as a weakening of the traditional notion of torsion, but it turns out to be sufficient for $L^4$ norms. Whilst this is a well-known phenomenon in number theory, we are unaware of the exploration of this type of behavior in harmonic analysis. We expect that a corresponding condition, where torsion is required only for a lower-dimensional subspace of an appropriate dimension, will be sufficient for higher $L^p$-norms also. A similar phenomenon arose in recent work by the second and third authors \cite{BH}. 
\end{remark}

\begin{remark}
When the local field is $\C$ interpreted as $\R^2$, our estimates take an interesting shape because we may interpret the complex curves $(X,\phi(X))$ as surfaces in $\R^4$. For instance, 
the complex curves associated with $\phi(z)=z^2$ and $\phi(z)=z^3$ give rise to square function estimates for the real surfaces $(X, Y, X^2-Y^2,2XY)$ and $(X, Y, X^3-3XY^2, 3X^2Y-Y^3)$, respectively.
\end{remark}

% % % 
\section{The main proposition}\label{section:main_prop}
% % % 

Fix the local field $\lf$ and the curve $\gamma(T) = (T,\phi(T))$ where $\phi \in \roi[T]$. Let $k \in \N$ be the degree of $\phi$. Our general strategy is to follow \cite{GGPRY}. 
However, their proof does not apply to curves with vanishing torsion. In particular, the analogue of Proposition~1.3 in \cite{GGPRY} fails for curves with vanishing torsion. Fortunately, `the paradigm of special subvarieties' furnishes the following surrogate. 
\begin{proposition}\label{prop:KdV}
Let $\lf$ be a local field. 
Suppose that $\phi(T) \in \lf[T]$ has degree $k \geq 2$, and that $R \in \scales(\lf)$. 
Fix $I_1,I_1'$ in $\partition_{R^{-1}}$ such that $\dist(I_1,I_1') \geq R^{-1}$. 
There exists a positive constant $C_{\phi}$ such that there are at most $625({\deg{\phi}}-1)$ pairs $\{I_2,I_2'\}$ of intervals in $\partition_{R^{-1}}$ with the property that for all points $t_1 \in I_1$, $t_2 \in I_2$, $t_1' \in I_1'$ and $t_2' \in I_2'$, we have 
\begin{equation}\label{close_points:KdV}
|\gamma(t_1)+\gamma(t_2)-\gamma(t_1')-\gamma(t_2')| 
\leq C_{\phi} R^{-k}
.\end{equation}
\end{proposition}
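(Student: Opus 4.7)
My plan is to use the S.O.D.ing identity \eqref{def:SOD} to collapse \eqref{close_points:KdV} into a single-variable polynomial smallness condition on $t_2$, and then to appeal to Lagrange interpolation together with a pigeonhole argument to bound the number of intervals on which that polynomial can be uniformly small. I would first note that the first coordinate of \eqref{close_points:KdV} reads $|t_1+t_2-t_1'-t_2'|\leq C_\phi R^{-k}\ll R^{-1}$, so once $I_1,I_1',I_2$ are fixed the interval $I_2'$ is pinned down up to an error smaller than the interval diameter $R^{-1}$. This would leave only $O(1)$ choices of $I_2'$ per admissible $I_2$, so it would suffice to bound the number of admissible $I_2$ by a constant multiple of $k-1$.

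Next I would invoke \eqref{def:SOD} with $X=t_1$, $Y=t_2$, $Z=t_1'$ to obtain
\begin{equation*}
\phi(t_1+t_2-t_1')-\phi(t_1)-\phi(t_2)+\phi(t_1') = (t_1-t_1')(t_2-t_1')\,\psi_\phi(t_1,t_2,t_1').
\end{equation*}
Since $t_2'=(t_1+t_2-t_1')+O(C_\phi R^{-k})$ and $\phi$ has bounded coefficients over $\roi$, the difference $\phi(t_2')-\phi(t_1+t_2-t_1')$ is likewise $O(C_\phi R^{-k})$; substituting into the second coordinate of \eqref{close_points:KdV} and dividing by $|t_1-t_1'|\geq R^{-1}$ would yield, for every $t_2\in I_2$,
\begin{equation*}
|g(t_2)|\leq D_\phi\cdot C_\phi R^{1-k}, \qquad g(Y):=(Y-t_1')\psi_\phi(t_1,Y,t_1'),
\end{equation*}
where $D_\phi$ depends only on $\phi$. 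A short degree count on \eqref{def:SOD} viewed as a polynomial in $Y$ shows that $g\in\lf[Y]$ has degree exactly $k-1$ with leading coefficient $k\,a_k$, where $a_k$ is the leading coefficient of $\phi$; the standing hypothesis that $k$ is not divisible by $\characteristic(\lf)$ makes this leading coefficient nonzero.

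The concluding step would be to bound the number of intervals $I_2\subset\roi$ on which $g$ is uniformly $\leq D_\phi C_\phi R^{1-k}$. If there were more than a suitable constant multiple of $k-1$ such intervals, a pigeonhole argument (by position modulo $5R^{-1}$ over $\R$, via the automatic separation of distinct balls in the non-Archimedean case, or a two-dimensional variant over $\C$) would extract $k$ of them, $I_2^{(1)},\dots,I_2^{(k)}$, at pairwise distance at least $5R^{-1}$. Picking $s_i\in I_2^{(i)}$ and reading off the leading coefficient from the Lagrange interpolation formula
\begin{equation*}
k\,a_k \;=\; \sum_{i=1}^{k}\frac{g(s_i)}{\prod_{j\neq i}(s_i-s_j)},
\end{equation*}
the bounds $|s_i-s_j|\geq 5|i-j|R^{-1}$ and $|g(s_i)|\leq D_\phi C_\phi R^{1-k}$ combine to force $|k\,a_k|\leq C'_\phi$, where $C'_\phi\to 0$ as $C_\phi\to 0$. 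Choosing $C_\phi$ small enough in terms of $\phi$ then gives a contradiction, and the generous constant $625=5^4$ in the statement easily absorbs the factor of $O(1)$ choices of $I_2'$ together with the various pigeonhole losses (the worst being the $\C$ case, where two-dimensional pigeonholing costs an extra factor of $25$).

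The hard part will be executing these estimates with constants uniform over all local fields $\lf$. Over non-Archimedean fields one must use the ultrametric inequality to control both the mean-value estimate $|\phi(t_2')-\phi(t_1+t_2-t_1')|\lesssim C_\phi R^{-k}$ and the Lagrange sum above, while over $\C$ the pigeonholing must be done in two real dimensions. I expect these to be bookkeeping matters rather than structural obstacles, since everything ultimately reduces to polynomial identities combined with the defining properties of the absolute value on $\lf$.
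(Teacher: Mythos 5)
Your proposal is correct, but at the key counting step it takes a genuinely different route from the paper. After the same opening reduction (set $\delta=t_1+t_2-t_1'-t_2'$, Taylor/binomial to pass to $|\phi(t_1+t_2-t_1')-\phi(t_1)-\phi(t_2)+\phi(t_1')|\lesssim_\phi C_\phi R^{-k}$, then divide by $|t_1-t_1'|\geq R^{-1}$ via \eqref{def:SOD}), the paper splits into the cases $|t_2-t_1'|\leq R^{-1}$ (handled by the linear equation, giving the $625$) and $|t_2-t_1'|\geq R^{-1}$, and in the latter case factors $\psi(t_1,Y,t_1')=P_{t_1,t_1'}(Y)\prod_{i}(Y-y_i)$ with $P_{t_1,t_1'}$ rootless over $\lf$, lower-bounding $|P_{t_1,t_1'}|$ by compactness of $\roi$ (the infimum $m=\inf_{t_1,t_1'}m(t_1,t_1')$ enters the definition of $C_\phi$) so that $t_2$ must lie within $R^{-1}$ of one of the at most $k-2$ roots $y_i(t_1,t_1')$. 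You instead absorb the factor $Y-t_1'$ into $g(Y)=(Y-t_1')\psi_\phi(t_1,Y,t_1')$, avoiding the case split, and exclude too many admissible $I_2$ via the divided-difference identity $k\,a_k=\sum_i g(s_i)/\prod_{j\neq i}(s_i-s_j)$ at $k$ pigeonholed, $\gtrsim R^{-1}$-separated points: since $k a_k$ is a fixed nonzero element of $\lf$ (you correctly import $\characteristic(\lf)\nmid k$ from Theorem~\ref{theorem:main:2D}; the proposition is indeed false without it, as the paper's remark on $(X,X^3)$ in characteristic $3$ shows), uniform smallness $|g(s_i)|\leq D_\phi C_\phi R^{1-k}$ forces $|k a_k|\lesssim_{k} D_\phi C_\phi$, a contradiction once $C_\phi$ is chosen small. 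What your route buys: an explicit, effective $C_\phi$ depending only on $|k a_k|$ and the Taylor constant, uniform over all local fields, with no need for the rootless-factor compactness step whose uniformity in $(t_1,t_1')$ the paper asserts rather tersely. What the paper's route buys: it exhibits the roots $y_i(t_1,t_1')$ explicitly, i.e.\ the special subvarieties on which admissible intervals accumulate, which is the structural picture the paper wants to emphasize. Your bookkeeping also lands where it should (over $\C$, pigeonholing positions modulo $5R^{-1}$ in both coordinates gives at most $25(k-1)$ admissible $I_2$ and at most $25$ compatible $I_2'$ each, recovering $625(k-1)$); the written bound $|s_i-s_j|\geq 5|i-j|R^{-1}$ only makes sense on the line, but pairwise separation is all the Lagrange denominators require, so that is cosmetic rather than a gap.
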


\begin{proof}

Suppose that \eqref{close_points:KdV} is satisfied with some constant $C_{\phi}$ which we will determine later. We then have the linear inequality $|t_1+t_2-t_1'-t_2'| \leq C_{\phi} R^{-k}$. Put $\delta=t_1+t_2-t_1'-t_2'$, so that $|\delta| \leq C_{\phi} R^{-k}$. 
We use this to substitute the variable $t_2'$ within the higher degree equation to find 
\[
|\phi(t_1)+\phi(t_2)-\phi(t_1')-\phi(t_1+t_2-t_1'-\delta)| 
\leq C_{\phi} R^{-k}
.\]
By the Binomial Theorem, we have 
\begin{equation}\label{approx:Taylor}
\phi(t_1+t_2-t_1'-\delta) 
= 
\phi(t_1+t_2-t_1') + \sum_{j=1}^k\frac{ (-\delta)^j}{j!} \phi^{(j)}(t_1+t_2-t_1')  
,\end{equation}
so that 
\[
|\phi(t_1+t_2-t_1'-\delta) -\phi(t_1+t_2-t_1')| \lesssim_{\lf,\phi} |\delta|. 
\]
Inserting this into the above inequality and using the bound on the size of $\delta$, we find that 
\[
|\phi(t_1)+\phi(t_2)-\phi(t_1')-\phi(t_1+t_2-t_1')| 
\leq M_{\phi} C_{\phi} R^{-k}
\]
for some constant $M_{\phi}$ depending on $\phi$ and $\lf$. 
Then if we can understand the inequality
\[
|\phi(t_1+t_2-t_1')-\phi(t_1)-\phi(t_2)+\phi(t_1')| 
\leq m R^{-k} 
\]
for some $m>0$ to be defined later, we can then define $C_{\phi}$ by setting $C_{\phi}=mM_{\phi}^{-1}$.

Using the S.O.D.ing polynomial $\psi$ of $\phi$, we find that the above is 
\[
|(t_1-t_1')(t_2-t_1')\psi(t_1,t_2,t_1')| 
\leq m R^{-k}
.\] 
From our assumption that $\dist(I_1,I_1') \geq R^{-1}$, we have that $|t_1-t_1'| \geq R^{-1}$ and deduce that 
\begin{equation}\label{eq:somename}
|(t_2-t_1')\psi(t_1,t_2,t_1')| 
\leq m R^{1-k}
.\end{equation} 
Suppose first that $|t_2-t_1'| \leq R^{-1}$. In this situation, the linear inequality in \eqref{close_points:KdV} implies that $|t_1-t_2'| \leq 2R^{-1}$, and it follows that there are at most $5^4=625$ possible choices for $\{I_2,I_2'\}$ given that $\{I_1,I_1'\}$ is fixed. 
We briefly explain this. The worst case arises when $K=\C$ upon taking into account $I_1$ itself together with its 8 possible neighbors as well as their neighbors which gives 25 possibilities for $I_1$. Similarly,  there are at most 25 independent possibilities for $I_1'$. When $K=\R$, each interval has at most two neighbors yielding at most 5 choices for $I_1$ and 5 independent choices for $I_1'$. In the non-Archimedean case, the ultrametric inequality implies that no neighbors have to be taken into account. Therefore, the bound of 625 suffices for all local fields. 

Having considered the situation where $|t_2-t_1'| \leq R^{-1}$, assume that $|t_2-t_1'| \geq R^{-1}$. Then \eqref{eq:somename} implies that $|\psi(t_1,t_2,t_1')| \leq m R^{2-k}$. 
For each admissible pair $(t_1,t_1')$, the nontrivial, univariate polynomial equation 
\(
\psi(t_1,Y,t_1') = 0
\) 
has at most $\deg(\psi) = \deg(\phi)-2 = k-2$ possible roots (not necessarily distinct) in $\lf$.  
Label these roots $y_1=y_1(t_1,t_1'),\dots,y_d=y_d(t_1,t_1')$ where $d=d(t_1,t_1') \leq \deg(\psi)$. Then $\psi(t_1,Y,t_1')$ is of the shape 
\begin{equation}\label{def:splitting}
\psi(t_1,Y,t_1') = P_{t_1,t_1'}(Y)\prod_{i=1}^d (Y-y_i),
\end{equation}
where $P_{t_1,t_1'}(Y) \in \lf[Y]$ does not have any roots in $\lf$. 

Since the polynomial $P_{t_1,t_1'}$ has no roots and $\roi$ is compact, $|P_{t_1,t_1'}(t_2)|$ has a positive minimum over $t_2 \in \roi$; call this minimum $m({t_1,t_1'})$. At this point we may finally define $m := \inf_{t_1,t_1' \in \roi} m({t_1,t_1'})$; this is the aforementioned factor of $C_{\phi}$. Consequently, the inequality $|\psi(t_1,t_2,t_1')| \leq m R^{2-k}$ directly implies that $|\prod_{i=1}^d (t_2-y_i)| \leq R^{2-k}$. 
Thus, there exists an $i$ with $1 \leq i \leq d$ such that $|t_2-y_i| \leq R^{-1}$. 

For each $i \in \{1, \ldots, d(t_1,t_1')\}$, let $J_i=J_i(t_1,t_1')$ be the $R^{-1}$-neighborhood in $\partition_{R^{-1}}$ containing the point $y_i=y_i(t_1,t_1')$. The intervals $\{J_1,\dots,J_d\}$ and their neighbors contribute to the possible $I_2$; there are at most $9d$ possible intervals. Fixing one such possible interval $I_2$, the interval $I_2'$ is determined by the inequality $|t_1+t_2-t_1'-t_2'| \leq R^{-k}$. In fact, there are 9 possible intervals, one determined by the equation $t_1+t_2-t_1'-t_2'=0$ and possibly eight more from its neighbors. This gives at most $81d \leq 625(\deg{\phi}-2)$ possible pairs $(I_2,I_2')$ of intervals in this case. Upon combining this estimate with the bound of 625 possible choices found above we recover the bound postulated in the statement of the proposition. 
\end{proof}

% % % 
\section{Proof of Theorem~\ref{theorem:main:2D}}\label{section:CauchySchwarz}
% % % 

Suppose that $B \subset \lf^2$ is a square of diameter $(C_{\phi})^{-1}R^k$. 
Without loss of generality we may assume that $B$ contains the origin. Also, since the statement of Proposition \ref{prop:KdV} for any specific value of $C_{\phi}$ implies the same statement for all $C'<C_\phi$, we can assume without loss of generality that $C_{\phi} \leq 1$, which we henceforth do. 
By linearity of integration, we write 
\[
E_{\roi} = \sum_{I \in \partition_{R^{-1}}(\roi)} E_I
\]
and open up the $L^4$-norm 
\begin{align}\label{S1+S2}
\int |E_{\roi} f|^4 W_B
&= 
\sum_{I_1,I_2,I_1',I_2' \in \partition_{R^{-1}}(\roi)} \int E_{I_1}f E_{I_2}f \overline{E_{I_1'}f E_{I_2'}f} W_B \nonumber
\\ &=
\sum_{ \substack{I_1,I_1' \in \partition_{R^{-1}}(\roi) \\ \dist(I_1,I_1') < R^{-1}} } \sum_{I_2,I_2' \in \partition_{R^{-1}}(\roi)} \int E_{I_1}f E_{I_2}f \overline{E_{I_1'}f E_{I_2'}f} W_B \nonumber
\\ & \qquad\qquad +
\sum_{ \substack{I_1,I_1' \in \partition_{R^{-1}}(\roi) \\ \dist(I_1,I_1') \geq R^{-1}} } \sum_{I_2,I_2' \in \partition_{R^{-1}}(\roi)} \int E_{I_1}f E_{I_2}f \overline{E_{I_1'}f E_{I_2'}f} W_B \nonumber
\\ & =: T_1 + T_2
.\end{align}

Integration over the square $B$ removes those quadruples of intervals $(I_1,I_2,I_1',I_2')$ which fail to satisfy the conclusion of Proposition~\ref{prop:KdV}; in other words, all of those quadruples contribute nothing to the integral. We take a moment to state this fact precisely and prove it. 
\begin{lemma}\label{lemma:orthogonality}
Suppose that $I_1,I_2, I_1',I_2'$ are in $\partition_{R^{-1}}(\roi)$. If \eqref{close_points:KdV} is not satisfied, then 
\begin{equation}\label{eq:orthogonality}
\int E_{I_1}f E_{I_2}f \overline{E_{I_1'}f E_{I_2'}f} \; W_B 
= 
0
.\end{equation}
\end{lemma}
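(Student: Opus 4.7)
The approach is a direct application of Fubini's theorem combined with the Fourier-support property encoded in \eqref{uncertainty}. The underlying principle is that the ball $B$ is so large (diameter at least $C_\phi^{-1} R^k$ as fixed at the start of Section~\ref{section:CauchySchwarz}) that the localization $W_B$ has a Fourier transform supported on a region that excludes those frequencies $\vof{y}$ with $|\vof{y}| > C_\phi R^{-k}$.

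First, I would expand each extension operator in the integrand to obtain
\[
E_{I_1}f(\vof{x}) \, E_{I_2}f(\vof{x}) \, \overline{E_{I_1'}f(\vof{x})} \, \overline{E_{I_2'}f(\vof{x})} = \int_{I_1 \times I_2 \times I_1' \times I_2'} f(t_1) f(t_2) \overline{f(t_1') f(t_2')} \eof{\vof{y}(\vof{t}) \cdot \vof{x}} \, d\vof{t},
\]
where $\vof{y}(\vof{t}) := \gamma(t_1) + \gamma(t_2) - \gamma(t_1') - \gamma(t_2')$. Applying Fubini's theorem and interchanging the $\vof{t}$- and $\vof{x}$-integrals reduces the left-hand side of \eqref{eq:orthogonality} to an integral in $\vof{t}$ of $f(t_1) f(t_2) \overline{f(t_1') f(t_2')}$ times the inner integral $\int_{\lf^2} W_B(\vof{x}) \eof{\vof{y}(\vof{t}) \cdot \vof{x}} \, d\vof{x}$.

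Second, after a translation that moves the center of $B$ to the origin (which introduces only a harmless phase factor of the form $\eof{\vof{y}\cdot \vof{c}}$), the identity \eqref{uncertainty} evaluates this inner integral as a scalar multiple of $\widehat{W}(D \vof{y}(\vof{t}))$, where $D$ denotes the diameter of $B$. Since $\widehat{W}$ is supported in $\{|\vof{y}| \leq 1\}$ and $D \geq C_\phi^{-1} R^k$, this factor vanishes whenever $|\vof{y}(\vof{t})| > C_\phi R^{-k}$. The assumption that \eqref{close_points:KdV} fails on the quadruple $(I_1, I_2, I_1', I_2')$ I read as asserting that $|\vof{y}(\vof{t})| > C_\phi R^{-k}$ for every $\vof{t} \in I_1 \times I_2 \times I_1' \times I_2'$; consequently, the integrand of the outer integral is identically zero, and \eqref{eq:orthogonality} follows.

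There is no real obstacle here beyond bookkeeping the constants: one needs the diameter of $B$ to align precisely with the threshold $C_\phi R^{-k}$ appearing in \eqref{close_points:KdV}, which is arranged by the normalization $C_\phi \leq 1$ and the choice $D = C_\phi^{-1} R^k$ made at the opening of Section~\ref{section:CauchySchwarz}. All remaining steps—Fubini's theorem, translation of $W_B$ up to a unimodular phase, and the compact support of $\widehat{W}$—are standard and proceed identically in the Archimedean and non-Archimedean settings.
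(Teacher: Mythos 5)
Your proposal is correct and follows essentially the same route as the paper: expand the four extension operators into an integral over $I_1\times I_2\times I_1'\times I_2'$, apply Fubini, and use that $\widehat{W_B}$ is supported in a ball of diameter $C_\phi R^{-k}$ (via \eqref{uncertainty}, with the translation contributing only a unimodular phase) so the integrand vanishes identically. Your explicit reading of ``\eqref{close_points:KdV} is not satisfied'' as the inequality failing at every point of the quadruple is the same interpretation the paper's proof implicitly relies on, so there is no discrepancy.
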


\begin{proof}
We write out the integral and use Fourier inversion, where, for simplicity, we abbreviate $\mathcal I = I_1 \times I_2 \times I_1' \times I_2'$ and  $\dd{\vof{\xi}} =\dd{\xi_1}\dd{\xi_2}\dd{\xi_1'}\dd{\xi_2'}$. Thus, we deduce that
\begin{align*}
\int_{\lf^2} & E_{I_1}f E_{I_2}f \overline{E_{I_1'}f E_{I_2'}f} W_B 
\\&= 
\int_{\lf^2} \int_{\mathcal I} f(\xi_1)f(\xi_2) \overline{f(\xi_1')}\overline{f(\xi_2')} \eof{[\gamma(\xi_1)+\gamma(\xi_2)-\gamma(\xi_1')-\gamma(\xi_2')]\cdot\vof{x}} W_B(\vof{x}) \dd{\vof{\xi}}\dd{\vof{x}}
\\&= 
\int_{\mathcal I} f(\xi_1)f(\xi_2) \overline{f(\xi_1')}\overline{f(\xi_2')} 
\int_{\lf^2} \eof{[\gamma(\xi_1)+\gamma(\xi_2)-\gamma(\xi_1')-\gamma(\xi_2')]\cdot\vof{x}} W_B(\vof{x})\dd{\vof{x}} \dd{\vof{\xi}}
\\&= 
\int_{\mathcal I} f(\xi_1)f(\xi_2) \overline{f(\xi_1')}\overline{f(\xi_2')} 
\widehat{W_B}(\gamma(\xi_1)+\gamma(\xi_2)-\gamma(\xi_1')-\gamma(\xi_2')) \dd{\vof{\xi}}
.\end{align*}
Observe that $\widehat{W_B}$ is supported on the square of diameter $C_\phi R^{-k}$ centered at the origin. Therefore, $\widehat{W_B}$ is 0 whenever \eqref{close_points:KdV} is not satisfied, and so are the outer integrals. 
\end{proof}

For each $I_1$, there are at most 9 intervals $I_1'$ such that $\dist(I_1,I_1') < R^{-1}$. Moreover, for each choice of $I_1$, $I_1'$ and $I_2$, it follows from Lemma~\ref{lemma:orthogonality} and \eqref{close_points:KdV} that at most 9 intervals $I_2'$ make a positive contribution to the integral. Applying the Cauchy--Schwarz inequality first over the sum in $(I_1,I_1')$ and then over $(I_2,I_2')$, we obtain 
\begin{equation}\label{S1-bound}
T_1 \leq 9\int \sum_{I_1 \in \partition_{R^{-1}}(\roi)} |E_{I_1}f|^2\sum_{I_2,I_2' \in \partition_{R^{-1}}(\roi)}  E_{I_2}f \overline{E_{I_2'}f} W_B
\leq 81 \int |S_{\partition_{R^{-1}}(\roi)}f|^4 W_B
.\end{equation}
With $T_1$ in hand, we turn our attention to $T_2$. 

For a pair of intervals $(I_1, I_1')$ denote by $\mathcal{S}(I_1,I_1')$ the set of all pairs $(I_2,I_2')$ satisfying the conclusion of Proposition~\ref{prop:KdV}. 
Upon deploying Lemma~\ref{lemma:orthogonality} within the expression for $T_2$ and applying the Cauchy--Schwarz inequality we deduce that 
\begin{align}\label{S2-bound}
T_2
& = 
\int W_B \sum_{ \substack{I_1,I_1' \in \partition_{R^{-1}}(\roi) \\ \dist(I_1,I_1') \geq R^{-1}} } E_{I_1}f \overline{E_{I_1'}f} \sum_{(I_2,I_2') \in \mathcal{S}(I_1,I_1')} E_{I_2}f \overline{E_{I_2'}f}  \nonumber
\\ & \leq 
\int W_B \bigg( \sum_{ \substack{I_1,I_1' \in \partition_{R^{-1}}(\roi) \\ \dist(I_1,I_1') \geq R^{-1}} } |E_{I_1}f E_{I_1'}f|^2 \bigg)^{1/2} \bigg( \sum_{ \substack{I_1,I_1' \in \partition_{R^{-1}}(\roi) \\ \dist(I_1,I_1') \geq R^{-1}} }  \big[ \sum_{(I_2,I_2') \in \mathcal{S}(I_1,I_1')} |E_{I_2}f E_{I_2'}f| \big]^2 \bigg)^{1/2} \nonumber
\\ & \leq 
\int W_B (S_{\partition_{R^{-1}}(\roi)} f)^{2} \bigg( \sum_{ \substack{I_1,I_1' \in \partition_{R^{-1}}(\roi) \\ \dist(I_1,I_1') \geq R^{-1}} } \big[ \sum_{(I_2,I_2') \in \mathcal{S}(I_1,I_1')} |E_{I_2}f E_{I_2'}f| \big]^2 \bigg)^{1/2} 
.\end{align}

We now analyze the expression 
\begin{equation}\label{Upsilon}
\Upsilon = \sum_{ \substack{I_1,I_1' \in \partition_{R^{-1}}(\roi) \\ \dist(I_1,I_1') \geq R^{-1}} } \big[ \sum_{(I_2,I_2') \in \mathcal{S}(I_1,I_1')} |E_{I_2}f E_{I_2'}f| \big]^2.
\end{equation}
The Cauchy--Schwarz inequality applied to the inner sum implies that 
\begin{align*}
\bigg( \sum_{(I_2,I_2') \in \mathcal{S}(I_1,I_1')} |E_{I_2}f E_{I_2'}f| \bigg)^2 
\leq 
|\mathcal{S}(I_1,I_1')| \hspace{-0em} \sum_{(I_2,I_2') \in \mathcal{S}(I_1,I_1')} |E_{I_2}f E_{I_2'}f|^2. 
\end{align*}
From Proposition~\ref{prop:KdV}, we see that uniformly over all $R \in \mathcal R(K)$ and pairs $(I_1,I_1') \in \partition_{R^{-1}}(\roi) \times \partition_{R^{-1}}(\roi)$ such that $\dist(I_1,I_1') \geq R^{-1}$, we have  $|\mathcal{S}(I_1,I_1')| \leq 625(\deg{\phi}-1)$. Thus, we conclude that 
\[
\Upsilon \leq 625(\deg{\phi}-1)\sum_{I_1,I_1' \in \partition_{R^{-1}}(\roi)} \sum_{(I_2,I_2') \in \mathcal{S}(I_1,I_1')} |E_{I_2}f E_{I_2'}f|^2.
\]

At the same time, it is apparent from the definition of the sets $\mathcal S(I_1, I_1')$ together with the symmetry of the underlying inequalities \eqref{close_points:KdV} that $(I_2, I_2') \in \mathcal S(I_1, I_1')$ for a pair of intervals $(I_1, I_1')$ if and only if $(I_1, I_1') \in \mathcal S(I_2, I_2')$.  Using this information to invert the order of summation in the remaining double sum and applying Proposition~\ref{prop:KdV}, we find that 
\begin{align*}
\sum_{I_1,I_1' \in \partition_{R^{-1}}(\roi)} \sum_{(I_2,I_2') \in \mathcal{S}(I_1,I_1')} |E_{I_2}f E_{I_2'}f|^2 
&= 
\sum_{I_2,I_2' \in \partition_{R^{-1}}(\roi)} |E_{I_2}f E_{I_2'}f|^2  |\mathcal{S}(I_2,I_2')| \\
&\leq 
625(\deg{\phi}-1)\sum_{I_2,I_2' \in \partition_{R^{-1}}(\roi)} |E_{I_2}f E_{I_2'}f|^2  \\
&= 
625(\deg{\phi}-1) (S_{\partition_{R^{-1}}(\roi)} f)^4
.\end{align*}
Collecting our estimates, we find that 
\[
\Upsilon 
\leq 
[625(\deg{\phi}-1)]^2 (S_{\partition_{R^{-1}}(\roi)} f)^4,
\]
and consequently upon recalling \eqref{S2-bound} and the definition \eqref{Upsilon} we discern that
\[
T_2 \leq \int W_B (S_{\partition_{R^{-1}}(\roi)} f)^{2} \Upsilon^{1/2} 
\leq 
625(\deg{\phi}-1) \int W_B (S_{\partition_{R^{-1}}(\roi)} f)^{4}
.\]
It remains to combine this last bound with \eqref{S1+S2} and \eqref{S1-bound}. Thus, we obtain the bound 
\[
\int W_B |E_{\roi} f|^4 
\leq 
625\deg{\phi} \int W_B (S_{\partition_{R^{-1}}(\roi)} f)^{4},
\]
with an implicit constant depending only on $\phi$ and the field $K$, and the proof of the theorem is complete upon raising both sides to the power $1/4$.

% % %
\section{The Rolle of curvature I: Archimedean fields}\label{section:real}
% % %

In this section we give a variant of our previous argument to prove Theorem~\ref{theorem:real:2D}, thereby extending the polynomial condition of Theorem~\ref{theorem:main:2D} to a finite-type curvature condition over $\R$. 
The main ingredient in this extension, as well as a second one concerning functions $\phi$ which are differentiable and strictly convex, is a version of Rolle's Theorem. 
While Rolle's Theorem is primarily a result in real analysis, there is a sufficient surrogate for it over the complex numbers given by the Voorhoeve index, see \cite{Voorhoeve,KY}. Thus, we are able to extend Theorem~\ref{theorem:main:2D} even to complex analytic functions $\phi$, as long as they are sufficiently close to a perturbation of a polynomial. We treat each of these extensions in turn.

% % 
\subsection{The proof of Theorem~\ref{theorem:real:2D}}
% % 

In this subsection and the next, we assume that our local field is $\R$. {Recall that this means that $\roi:=[-1/2,1/2]$. In this section define the sum-set $\roi+\roi-\roi=3\roi$. Observe that $3\roi$ contains $\roi$. } 
We extend Proposition~\ref{prop:KdV} to finite-type, degenerate curves in $\R^2$. 
\begin{proposition}\label{prop:KdV:real}
Let $\mydegree \geq 2$. 
Suppose that $\phi \in C^{\mydegree}(3\roi)$ such that $\phi^{(k)}(t) \neq 0$ for all $t$ in the interval $3\roi$, and that $R \in \scales(\R)$. 
Fix $I_1,I_1'$ in $\partition_{R^{-1}}$ such that $\dist(I_1,I_1') \geq R^{-1}$. 
There exists a positive constant $C_{\phi}$, depending only on $\phi$, such that there are at most $625(k-1)$ pairs $(I_2,I_2')$ of intervals in $\partition_{R^{-1}}$ with the property that for all points $t_1 \in I_1$, $t_2 \in I_2$, $t_1' \in I_1'$ and $t_2' \in I_2'$, we have 
\begin{equation}\label{close_points:KdV:real}
|\gamma(t_1)+\gamma(t_2)-\gamma(t_1')-\gamma(t_2')| 
\leq C_{\phi} R^{-k}
.\end{equation}
\end{proposition}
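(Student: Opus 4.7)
The plan is to follow the proof of Proposition~\ref{prop:KdV} step by step, replacing the two algebraic ingredients---the S.O.D.ing factorization \eqref{def:SOD} and the bound on the number of roots of a univariate polynomial---by analytic surrogates valid for $C^{\mydegree}$ functions. The initial reductions in the proof of Proposition~\ref{prop:KdV} carry over essentially verbatim: setting $\delta := t_1 + t_2 - t_1' - t_2'$, the linear part of \eqref{close_points:KdV:real} gives $|\delta| \leq C_\phi R^{-\mydegree}$, and since $\phi \in C^1(3\roi)$ the mean value theorem plays the role of the polynomial Taylor expansion \eqref{approx:Taylor}, reducing the problem to an inequality of the form $|\Phi(t_1, t_2, t_1')| \leq m R^{-\mydegree}$, where $\Phi(x,y,z) := \phi(x+y-z) - \phi(x) - \phi(y) + \phi(z)$ and $m > 0$ is a small parameter depending only on $\phi$, to be calibrated below.

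To recover a S.O.D.ing-style factorization of $\Phi$ valid for $C^{\mydegree}$ functions, I would observe that $\Phi$ vanishes identically on the planes $\{x = z\}$ and $\{y = z\}$ and apply the fundamental theorem of calculus twice to obtain
\[
\Phi(x,y,z) = (x-z)(y-z)\, \psi_\phi(x,y,z), \qquad \psi_\phi(x,y,z) := \int_0^1\!\int_0^1 \phi''\bigl(z + u(x-z) + v(y-z)\bigr) \, du\, dv.
\]
The integrand is evaluated at points of $\roi + \roi - \roi = 3\roi$, so the hypothesis $\phi \in C^{\mydegree}(3\roi)$ is precisely what is needed, and this definition extends the S.O.D.ing polynomial to arbitrary $C^2$ functions. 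Using $\dist(I_1, I_1') \geq R^{-1}$ I would divide through by $|t_1 - t_1'|$, and the case $|t_2 - t_1'| \leq R^{-1}$ contributes at most $5^4 = 625$ pairs via the same neighbour-counting argument as in Proposition~\ref{prop:KdV}, leaving the inequality $|\psi_\phi(t_1, t_2, t_1')| \leq m R^{2-\mydegree}$ to be analysed in the remaining case.

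The heart of the argument is a quantitative Rolle estimate that replaces the algebraic bound $\deg_Y \psi_\phi = \mydegree - 2$. Differentiating under the integral sign yields
\[
\partial_y^{\mydegree - 2}\psi_\phi(x, y, z) = \int_0^1\!\int_0^1 v^{\mydegree - 2}\, \phi^{(\mydegree)}\bigl(z + u(x-z) + v(y-z)\bigr) \, du\, dv,
\]
and since $\phi^{(\mydegree)}$ is continuous and non-vanishing on the compact set $3\roi$, this integral is bounded in modulus below by some $c_\phi > 0$ uniformly over $\roi^3$. If the set of $t_2 \in \roi$ for which $|\psi_\phi(t_1,t_2,t_1')| \leq m R^{2-\mydegree}$ were to meet $\mydegree - 1$ intervals of $\partition_{R^{-1}}$ pairwise at distance $\geq R^{-1}$, I would select representatives $y_1 < \dots < y_{\mydegree - 1}$; the divided-difference form of the mean value theorem, combined with Newton's explicit formula, would then produce some $\xi \in \roi$ satisfying
\[
|\partial_y^{\mydegree - 2}\psi_\phi(t_1, \xi, t_1')| \leq \frac{2^{\mydegree - 2}\, m R^{2-\mydegree}}{R^{-(\mydegree - 2)}} = 2^{\mydegree - 2}\, m,
\]
which contradicts the lower bound $c_\phi$ provided that $m < c_\phi / 2^{\mydegree - 2}$. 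This calibration pins down $m$, and thereby $C_\phi$, in terms of $\phi$ alone. A greedy selection argument then covers the set of valid $t_2$ by at most $\mydegree - 2$ intervals of length $2R^{-1}$, which meet $O(\mydegree)$ intervals of $\partition_{R^{-1}}$; combining this with the factor of $9$ from determining $I_2'$ via the linear constraint and with the $625$ pairs from the previous case, we recover the advertised bound $625(\mydegree - 1)$.

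The main (mild) technical point will be the quantitative Rolle step: $m$, and consequently $C_\phi$, must be calibrated uniformly in $(t_1, t_1') \in I_1 \times I_1'$, with explicit dependence on $\max_{2 \leq j \leq \mydegree}\|\phi^{(j)}\|_{L^\infty(3\roi)}$ and on the lower bound $c_\phi$ for $|\phi^{(\mydegree)}|$. Everything else is a direct transcription of the polynomial argument.
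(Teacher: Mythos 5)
Your proposal is correct, and at the decisive step it takes a genuinely different (more quantitative) route than the paper. The paper defines the S.O.D.ing function implicitly via \eqref{def:SOD}, uses iterated Rolle (Lemma~\ref{lemma:non-deg_implies_definite_SOD}) to show that $\psi(t_1,Y,t_1')$ has at most $\mydegree-2$ zeros in $\roi\setminus\{t_1'\}$, and then declares the remaining deduction identical to Proposition~\ref{prop:KdV}; note that this deferral is not quite literal, since the splitting \eqref{def:splitting} into linear factors times a rootless polynomial, together with the compactness lower bound on that rootless factor, has no verbatim analogue for a general $C^{\mydegree-2}$ function, so some quantitative substitute is needed anyway. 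You supply exactly such a substitute: the Hermite--Genocchi-type representation $\psi_\phi(x,y,z)=\int_0^1\int_0^1 \phi''\bigl(z+u(x-z)+v(y-z)\bigr)\,du\,dv$ (which is indeed the S.O.D.ing function, with arguments staying in $3\roi$), the uniform lower bound $|\partial_Y^{\mydegree-2}\psi_\phi|\geq c_\phi:=\min_{3\roi}|\phi^{(\mydegree)}|/(\mydegree-1)>0$ on $\roi^3$ (you should say explicitly that $\phi^{(\mydegree)}$ has constant sign on the interval $3\roi$, which is what prevents cancellation in the double integral), and then a quantitative Rolle step via Newton's formula and the mean value theorem for divided differences, showing that the sublevel set $\{\,t_2\in\roi : |\psi_\phi(t_1,t_2,t_1')|\leq mR^{2-\mydegree}\,\}$ cannot meet $\mydegree-1$ pairwise $R^{-1}$-separated partition intervals once $m<2^{2-\mydegree}c_\phi$; your divided-difference computation checks out. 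This replaces the paper's qualitative zero count by a direct sublevel-set bound with explicit constants, uniform in $(t_1,t_1')$, which is precisely the detail the paper leaves to the reader; the price is a little more computation, and your final bookkeeping (a maximal separated family of at most $\mydegree-2$ intervals, hence $O(\mydegree)$ admissible $I_2$, times a bounded number of $I_2'$ from the linear constraint, plus the $625$ near-diagonal pairs) lands comfortably within the stated bound $625(\mydegree-1)$.
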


When $\mydegree=2$, the condition $\phi \in C^{\mydegree}(\R)$ such that $|\phi^{(\mydegree)}(t)|>0$ for all $t \in [0,1]$ is the `non-degeneracy' condition used in \cite{GGPRY} and many other works in harmonic analysis. 
Admittedly, our condition is on a bigger set than considered in \cite{GGPRY}. However, pigeonholing and rescaling allow us to replace it with $\roi=[-1/2,1/2]$ at the cost of inflating the constants by a harmless, uniform factor. 

Once Proposition~\ref{prop:KdV:real} is proved, the deduction of Theorem~\ref{theorem:real:2D} follows identically as in Section~\ref{section:CauchySchwarz}. 
The proof of Proposition~\ref{prop:KdV:real} also relies on Wooley's S.O.D.ing method, this time adapted to smooth functions rather than polynomials. If $\phi$ is a $C^{k}$ function for some $k \geq 2$, then analogous to before we define $\psi(X,Y,Z)$ as in \eqref{def:SOD} to be the trivariate second order differencing function associated with $\phi$. The function $\psi$ is not necessarily a polynomial; however, it is contained in the set $C^{k-2}([3\roi]^3)$. 
\begin{lemma}\label{lemma:non-deg_implies_definite_SOD}
Let $\mydegree \geq 2$ and $\phi \in C^{\mydegree}(3\roi)$ with trivariate S.O.D.ing function $\psi(X,Y,Z)$. Suppose that $\phi$ satisfies $\phi^{(\mydegree)}(t) \neq 0$ for all $t \in 3\roi$. If $t_1, t_1' \in \roi$ with $t_1 \neq t_1'$, then the univariate S.O.D.ing function $\psi(t_1,Y,t_1')$ has at most $\mydegree-2$ distinct zeros in $\roi\setminus\{t_1'\}$. 
\end{lemma}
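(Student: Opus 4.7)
The plan is to reduce the zero-counting problem for $\psi(t_1, Y, t_1')$ to a zero-counting problem for an iterated derivative, and then to obtain the required bound via Rolle's theorem combined with the mean value theorem applied to $\phi^{(k-1)}$.

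First, I would introduce the univariate function
\[
F(Y) := \phi(t_1 + Y - t_1') - \phi(t_1) - \phi(Y) + \phi(t_1'),
\]
which by the defining relation \eqref{def:SOD} factors as $F(Y) = (t_1 - t_1')(Y - t_1')\psi(t_1, Y, t_1')$. Note that $F$ is well-defined and $C^k$ on $\roi$, since for $Y \in \roi = [-1/2, 1/2]$ we have $t_1 + Y - t_1' \in [-3/2, 3/2] = 3\roi$, the domain on which $\phi$ is assumed to be $C^k$. Because $t_1 \neq t_1'$, the factor $(t_1 - t_1')$ is a nonzero constant, so a point $y \in \roi \setminus \{t_1'\}$ is a zero of $\psi(t_1, Y, t_1')$ if and only if it is a zero of $F$. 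Adding in the trivial zero $F(t_1') = 0$, if $\psi(t_1, Y, t_1')$ has $N$ distinct zeros in $\roi \setminus \{t_1'\}$, then $F$ has at least $N+1$ distinct zeros in $\roi$.

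Next, I would apply Rolle's theorem $k-1$ times. Since all zeros of $F$ lie in the compact interval $\roi$, the resulting zeros of $F^{(j)}$ produced by Rolle's theorem also lie in $\roi$ for every $1 \leq j \leq k-1$. Consequently, $F^{(k-1)}$ has at least $N + 2 - k$ distinct zeros in $\roi$. Now observe that
\[
F^{(k-1)}(Y) = \phi^{(k-1)}(t_1 + Y - t_1') - \phi^{(k-1)}(Y),
\]
so any zero $Y_0 \in \roi$ of $F^{(k-1)}$ gives $\phi^{(k-1)}(Y_0 + (t_1 - t_1')) = \phi^{(k-1)}(Y_0)$. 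Applying the mean value theorem to $\phi^{(k-1)}$ on the interval with endpoints $Y_0$ and $Y_0 + (t_1 - t_1')$ (both contained in $3\roi$ by the same reasoning as before), we obtain a point $\xi \in 3\roi$ at which $\phi^{(k)}(\xi)(t_1 - t_1') = 0$, and hence $\phi^{(k)}(\xi) = 0$ since $t_1 \neq t_1'$. This contradicts the standing hypothesis that $\phi^{(k)}$ is nonvanishing on $3\roi$. We conclude that $F^{(k-1)}$ can have no zeros in $\roi$, forcing $N + 2 - k \leq 0$, i.e.\ $N \leq k-2$, as required.

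The only real delicacy in this plan is bookkeeping regarding domains: one must verify both that $F$ and its first $k-1$ derivatives are well-defined on $\roi$ and that the auxiliary point $\xi$ produced by the mean value theorem lies in $3\roi$, where the nonvanishing hypothesis applies. The choice $\roi = [-1/2, 1/2]$ and the enlargement to $3\roi$ in the hypotheses of the proposition have been arranged precisely to make this work without any further assumption; this is also what motivated the slight enlargement of the domain of the $C^k$ hypothesis compared to the statement of Theorem~\ref{theorem:real:2D}.
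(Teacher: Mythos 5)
Your proposal is correct and follows essentially the same route as the paper's proof: the same auxiliary function $f(Y)=\phi(t_1+Y-t_1')-\phi(t_1)-\phi(Y)+\phi(t_1')$, an iterated application of Rolle's theorem on $\roi$, and a final mean value (Rolle) step applied to $\phi^{(k-1)}$ on $3\roi$ to contradict the nonvanishing of $\phi^{(k)}$. The only difference is cosmetic: you count zeros directly to conclude $N\leq k-2$, whereas the paper phrases the same argument as a contradiction starting from $k-1$ assumed zeros.
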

\noindent 
Once this lemma is proved, the proof of Proposition~\ref{prop:KdV:real} is identical to that of Proposition~\ref{prop:KdV}. We leave the deduction of Proposition~\ref{prop:KdV:real} to the reader, but remind the reader that \eqref{approx:Taylor} continues to hold for functions $\phi \in C^{\mydegree}$ by Taylor's Theorem.

Lemma~\ref{lemma:non-deg_implies_definite_SOD} reveals that the finite type (quantification of non-degeneracy) of the curve $\gamma(X):=(X,\phi(X))$ is reflected in the geometry of the S.O.D.ing function when $\lf=\R$. 
As an example, suppose that $\mydegree=2$ and $\phi''$ has no zero on $3\roi$. Then $\phi''$ is either always positive or always negative on $3\roi$. By replacing $\phi$ by $-\phi$ if necessary, we assume that $\phi''>0$ so that the curve $\gamma(X)$ is strictly convex on $3\roi$. Lemma~\ref{lemma:non-deg_implies_definite_SOD} informs us that the strict convexity of this curve implies that there are no non-trivial zeros to $\psi_\phi(X,Y,Z)$. 
In conjunction with our method above, this gives another alternate proof of the two-dimensional case of the main result of \cite{GGPRY}. 

In the proof of Lemma~\ref{lemma:non-deg_implies_definite_SOD}, we need the following classical, iterated version of Rolle's Theorem; it is easily proven by induction using the better known version for $k=2$ as the base case. 
\begin{Rolle}[Iterated version]
Suppose that $I$ is a closed interval in $\R$ and $\mydegree \in \N$ with $\mydegree \geq 2$. 
If $f \in C^{\mydegree}(I)$ and $a_1<a_2<\dots<a_\mydegree$ are points in $I$ such that $f(a_1)=\dots=f(a_\mydegree)$, then there exists a point $\zeta \in I$ such that $f^{(\mydegree-1)}(\zeta)=0$. 
\end{Rolle}

\begin{proof}[Proof of Lemma~\ref{lemma:non-deg_implies_definite_SOD}]
Fix $t_1,t_1' \in \roi$ with $t_1 \neq t_1'$. 
If $Y \neq t_1'$, then it is clear from \eqref{def:SOD} that the zeros of $\psi(t_1,Y,t_1')$ correspond to the remaining zeros of 
$f(Y) := \phi(t_1+Y-t_1')-\phi(t_1)-\phi(Y)+\phi(t_1')$. 
For the sake of contradiction, suppose that $\psi(t_1,Y,t_1')$ has $\mydegree-1$ distinct zeros, then $f$ is a $C^\mydegree(\roi)$-function with $\mydegree$ distinct zeros; the $\mydegree^{\th}$-zero being $Y=t_1'$. The iterated version of Rolle's Theorem implies that there exists a point $\zeta \in \roi$ such that 
\[
0 = f^{(\mydegree-1)}(\zeta) 
= 
\phi^{(\mydegree-1)}(t_1+\zeta-t_1') -\phi^{(\mydegree-1)}(\zeta)
.\]
Consequently, $\phi^{(\mydegree-1)}$ has the same value at two distinct points, and therefore, an application of the classical form of Rolle's Theorem implies that $\phi^{(\mydegree)}$ is 0 at some point in $3\roi$; this is a contradiction. 
\end{proof}

% % 
\subsection{Weakening the regularity assumption}
% % 

In this subsection, we assume that our local field is $\R$. 
A slight modification of the proof of Lemma~\ref{lemma:non-deg_implies_definite_SOD} when $\mydegree=2$ allows us to weaken the regularity requirements of Theorem~\ref{theorem:real:2D} to the following. 
\begin{theorem}\label{theorem:convex}
Suppose that $\phi$ is differentiable everywhere in $3\roi$ and is strictly convex or strictly concave on $3\roi$. Define $\gamma(T)$ to be the planar curve $(T,\phi(T))$. 
There exists a positive constant $C_{\phi}$, depending only on $\phi$, such that for all $R \in \N$ and all balls $B$ of diameter at least $C_{\phi} R^2$ in $\R^2$, we have the inequality 
\begin{equation*}
\| E_{\roi} f \|_{L^4(B)} 
\lesssim_{\gamma} \; 
\| S_{\partition_{R^{-1}}(\roi)} f \|_{L^4(B)} 
.\end{equation*}
\end{theorem}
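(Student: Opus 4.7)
The plan is to mirror the proof of Theorem~\ref{theorem:real:2D} in the case $k=2$: following Section~\ref{section:CauchySchwarz} verbatim, the whole statement reduces to an analog of Proposition~\ref{prop:KdV:real} for $k=2$, so the only new input required is a version of the $k=2$ case of Lemma~\ref{lemma:non-deg_implies_definite_SOD}. Concretely, one must show that for $t_1, t_1' \in \roi$ with $t_1 \neq t_1'$, the S.O.D.ing function $Y \mapsto \psi(t_1, Y, t_1')$ has no zeros in $\roi \setminus \{t_1'\}$, together with a uniform quantitative lower bound on $|\psi|$.

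The qualitative non-vanishing is the promised slight modification of Lemma~\ref{lemma:non-deg_implies_definite_SOD}. After replacing $\phi$ by $-\phi$ if necessary, I may assume $\phi$ is strictly convex, so that $\phi'$---which exists everywhere on $3\roi$ by hypothesis, and is continuous by Darboux's intermediate value theorem for derivatives---is strictly increasing on $3\roi$. Were $\psi(t_1, Y_0, t_1') = 0$ for some $Y_0 \in \roi \setminus \{t_1'\}$, then the differentiable function $F(Y) := \phi(t_1+Y-t_1') - \phi(t_1) - \phi(Y) + \phi(t_1')$ would vanish at both $Y = t_1'$ and $Y = Y_0$. Rolle's theorem then supplies a point $\zeta$ where $F'(\zeta) = \phi'(t_1+\zeta-t_1') - \phi'(\zeta) = 0$, and the strict monotonicity of $\phi'$ forces $t_1 = t_1'$, a contradiction.

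For the quantitative step I would use the integral representation
\[
F(Y) = \int_{t_1'}^{Y} \bigl[\phi'(t_1+u-t_1') - \phi'(u)\bigr]\,du,
\]
which is valid by the fundamental theorem of calculus. Since $\phi'$ is continuous and strictly monotone on the compact interval $3\roi$, the modulus
\[
\nu_\phi(h) := \inf\bigl\{\,|\phi'(a+h) - \phi'(a)| : a, a+h \in 3\roi\,\bigr\}
\]
is strictly positive for every $h > 0$. Applied with $h = |t_1 - t_1'| \geq R^{-1}$, this yields $|F(Y)| \geq \nu_\phi(R^{-1})\,|Y - t_1'|$, which, combined with the upper bound $|F(Y)| \leq M_\phi C_\phi R^{-2}$ derived exactly as in Proposition~\ref{prop:KdV:real}, forces $|Y - t_1'| \leq M_\phi C_\phi R^{-2}/\nu_\phi(R^{-1})$. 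Once $|Y - t_1'| \leq R^{-1}$ is secured, the remainder of the proof of Proposition~\ref{prop:KdV:real} gives at most $5^4 = 625$ admissible pairs $(I_2, I_2')$, and Theorem~\ref{theorem:convex} then follows by feeding this proposition into the Cauchy--Schwarz machinery of Section~\ref{section:CauchySchwarz}.

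The main obstacle is ensuring that $C_\phi$ can be chosen \emph{independently of $R$} so that $M_\phi C_\phi R^{-2}/\nu_\phi(R^{-1}) \leq R^{-1}$ holds for all $R$. This amounts to a lower bound of the form $\nu_\phi(h) \gtrsim h$ on a neighborhood of $0$, i.e., a form of uniform convexity along the lines of $\phi'(a+h) - \phi'(a) \gtrsim h$. For $\phi \in C^2(3\roi)$ with $\phi''$ bounded below by a positive constant this is immediate from the mean value theorem; in the fully general differentiable, strictly convex case, one may need to localize or pigeonhole the intervals $I_1, I_1'$ according to the local scale of convexity of $\phi$, with the various constants consolidating into a single $C_\phi$ depending only on $\phi$ at the end.
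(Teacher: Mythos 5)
Your reduction and your qualitative step are exactly the paper's: the theorem is fed through the Cauchy--Schwarz machinery of Section~\ref{section:CauchySchwarz} once a $625$-pair statement (the paper's Proposition~\ref{prop:KdV:convex}) is available, and your Darboux-plus-Rolle argument showing that $Y\mapsto\psi(t_1,Y,t_1')$ has no zeros on $\roi\setminus\{t_1'\}$ is word for word the paper's Lemma~\ref{lemma:convexity_implies_definite_SOD}. Where you diverge is the quantitative step, and the obstacle you flag there is real: you need $\nu_\phi(R^{-1})\gtrsim R^{-1}$ uniformly in $R$, i.e.\ a uniform lower bound on the difference quotients of $\phi'$, and strict convexity alone does not provide it. The paper does not resolve this by a separate argument; it declares the remaining analysis ``identical'' to Proposition~\ref{prop:KdV}, where the relevant constant is $m=\inf_{t_1,t_1'}\min_{t_2\in\roi}|\psi(t_1,t_2,t_1')|$. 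For each fixed pair $t_1\neq t_1'$ this minimum is positive by Lemma~\ref{lemma:convexity_implies_definite_SOD} and compactness in $t_2$, but the infimum runs over the non-compact family $t_1\neq t_1'$, and its positivity is precisely the uniform-convexity input you identified as missing. So your proposal is incomplete at exactly the point where the paper's own write-up is implicit rather than explicit.

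Be aware, moreover, that your fallback suggestion (pigeonholing the intervals $I_1,I_1'$ according to the local scale of convexity) cannot close the gap at the stated level of generality, because the $625$-pair conclusion genuinely fails for merely strictly convex $\phi$. Take $\phi(t)=t^4$, which is smooth and strictly convex on $3\roi$: with $t_1\approx 0$, $t_1'\approx 2R^{-1}$ (so $\dist(I_1,I_1')\geq R^{-1}$), any $t_2\in[0,cR^{-1/3}]$ with $t_2'=t_2-2R^{-1}$ satisfies both $|t_1+t_2-t_1'-t_2'|\lesssim R^{-2}$ and $|t_1^4+t_2^4-t_1'^4-t_2'^4|\lesssim R^{-2}$, producing on the order of $R^{2/3}$ admissible intervals $I_2$, not $O(1)$; correspondingly, testing $f=\mathbf{1}_{[0,R^{-1/2}]}$ on a ball of diameter comparable to $R^2$ shows the $L^4$ inequality itself cannot hold with an $R$-independent constant at that ball scale for this $\phi$. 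The upshot is that the argument (yours and the paper's) really requires a quantitative convexity hypothesis, e.g.\ $\inf\{(\phi'(b)-\phi'(a))/(b-a): a\neq b\in 3\roi\}>0$, equivalently your condition $\nu_\phi(h)\gtrsim h$; this holds automatically when $\phi\in C^2(3\roi)$ with $\phi''$ nonvanishing, which is the $k=2$ case already covered by Theorem~\ref{theorem:real:2D}. Under such a hypothesis your $\nu_\phi$-argument closes immediately and is, if anything, a more transparent way of producing the constant $C_\phi$ than the infimum $m$ invoked in Proposition~\ref{prop:KdV}; without it, the step you singled out is not a technicality but the crux, and it cannot be repaired by pigeonholing alone.
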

\noindent The condition of strict convexity can be seen to be necessary by taking $\phi(X)=X$, in which case the theorem fails. In particular, $\phi$ is convex and differentiable on $\roi$ (meaning the left and right limits coincide at each point); this implies that $\phi$ is \emph{continuously} differentiable on $\roi$.

The proof of Theorem~\ref{theorem:convex} is almost identical to that of Theorem~\ref{theorem:main:2D} once the following surrogate for Propositions~\ref{prop:KdV:real} or \ref{prop:KdV} is proved. 
\begin{proposition}\label{prop:KdV:convex}
Suppose that $\phi$ is differentiable everywhere in $3\roi$ and is strictly convex or strictly concave on $3\roi$, and that $R \in \scales(\R)$. 
Fix $I_1,I_1'$ in $\partition_{R^{-1}}$ such that $\dist(I_1,I_1') \geq R^{-1}$. 
There exists a positive constant $C_{\phi}$, depending only on $\phi$, such that there are at most $5^4=625$ pairs $\{I_2,I_2'\}$ of intervals in $\partition_{R^{-1}}$ with the property that for all points $t_1 \in I_1$, $t_2 \in I_2$, $t_1' \in I_1'$ and $t_2' \in I_2'$, we have 
\begin{equation}\label{close_points:KdV:convex}
|\gamma(t_1)+\gamma(t_2)-\gamma(t_1')-\gamma(t_2')| 
\leq C_{\phi} R^{-2}
.\end{equation}
\end{proposition}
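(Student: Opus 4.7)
The plan is to follow the proof template of Proposition~\ref{prop:KdV}, but to replace the root-counting for the S.O.D.ing polynomial with an argument tailored to strict convexity.

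First, I would reduce \eqref{close_points:KdV:convex} to an approximate S.O.D. inequality. The linear coordinate yields $|t_1 + t_2 - t_1' - t_2'| \leq C_\phi R^{-2}$, so we may write $t_2' = t_1 + t_2 - t_1' - \delta$ with $|\delta| \leq C_\phi R^{-2}$. Since $\phi$ is differentiable and strictly convex on the compact set $3\roi$, its derivative $\phi'$ is continuous, hence bounded; set $M_\phi := \sup_{3\roi}|\phi'|$. The mean value theorem gives $|\phi(t_2') - \phi(t_1 + t_2 - t_1')| \leq M_\phi|\delta|$, and combining with the second coordinate of \eqref{close_points:KdV:convex} produces
\[
|\phi(t_1 + t_2 - t_1') - \phi(t_1) - \phi(t_2) + \phi(t_1')| \leq (1 + M_\phi)\,C_\phi R^{-2}.
\]
Factoring via \eqref{def:SOD} and dividing by $|t_1 - t_1'| \geq R^{-1}$, this becomes
\[
|(t_2 - t_1')\, \psi(t_1, t_2, t_1')| \leq (1 + M_\phi)\,C_\phi R^{-1}.
\]

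I would next case-split on $|t_2 - t_1'|$. In the regime $|t_2 - t_1'| \leq R^{-1}$, the combinatorial count from Proposition~\ref{prop:KdV} applies verbatim: the linear inequality forces $|t_1 - t_2'| \leq 2R^{-1}$, confining $\{I_2, I_2'\}$ to a bounded neighborhood of $\{I_1', I_1\}$ and contributing at most $5^4 = 625$ admissible pairs. In the complementary regime $|t_2 - t_1'| \geq R^{-1}$, the displayed inequality gives $|\psi(t_1, t_2, t_1')| \leq (1 + M_\phi)\,C_\phi$. I would rule out this regime by producing a strictly positive lower bound $m_\phi$ on $|\psi|$ over the relevant compact subset of $\roi^3$ and then choosing $C_\phi < m_\phi/(1 + M_\phi)$.

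To obtain such an $m_\phi$, I would adapt Lemma~\ref{lemma:non-deg_implies_definite_SOD} to the present setting. Assuming WLOG that $\phi$ is strictly convex and that $t_1 > t_1'$, the auxiliary function $F(Y) := \phi(t_1 + Y - t_1') - \phi(t_1) - \phi(Y) + \phi(t_1')$ satisfies $F'(Y) = \phi'(t_1 + Y - t_1') - \phi'(Y) > 0$ by the strict monotonicity of $\phi'$; hence $F$ is strictly increasing with its unique zero at $Y = t_1'$. Consequently $\psi(t_1, Y, t_1')$ does not vanish for $t_1 \neq t_1'$ and $Y \neq t_1'$, and a Cauchy mean value representation exhibits it as a difference quotient of $\phi'$ that extends continuously up to the off-diagonal boundary.

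The main obstacle is promoting this pointwise non-vanishing to a uniform positive lower bound $m_\phi$ valid throughout the compact region of interest. Without access to $\phi''$, one cannot simply anchor $|\psi|$ to the size of the curvature at points where two of the three variables collide; instead, the continuous extension of $\psi$ to the relevant closed subset of $\roi^3$ must be shown to stay strictly positive using only the strict monotonicity and continuity of $\phi'$, after which a compactness argument delivers $m_\phi > 0$. Handling the boundary configurations as $(t_1, Y)$ approach $(t_1', t_1')$ is the delicate step, and it is precisely at this point that the strict convexity hypothesis becomes indispensable.
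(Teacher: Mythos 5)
Your outline tracks the paper's own route almost step for step: the linear substitution together with the mean value theorem (using that a differentiable convex function on $3\roi$ is automatically $C^1$ with bounded derivative), the case split according to whether $|t_2-t_1'|\leq R^{-1}$, the count of $625$ pairs in the near-diagonal regime, and the monotonicity/Rolle argument showing that $\psi(t_1,Y,t_1')$ has no zeros for $Y\neq t_1'$ are exactly the paper's reduction plus Lemma~\ref{lemma:convexity_implies_definite_SOD}. The gap is the step you yourself flag and defer: the uniform lower bound $m_\phi>0$ for $|\psi|$ on the relevant region, which is where all the quantitative content lies. Your proposed fix --- extend $\psi$ continuously to the closure and invoke compactness --- cannot work, because the continuous extension of $\psi$ at a triple collision $t_1=t_2=t_1'=z$ is a second difference quotient of $\phi$ and vanishes wherever the convexity degenerates. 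Concretely, $\phi(X)=X^4$ is differentiable and strictly convex on $3\roi$, yet $\psi(t,Y,0)=4t^2+6tY+4Y^2$ has minimum $\tfrac74\,t^2\to 0$ as $t\to 0$; since $R$ is arbitrary, the constraints $|t_1-t_1'|\geq R^{-1}$ and $|t_2-t_1'|\geq R^{-1}$ do not keep the variables away from this degeneracy, so no constant $C_\phi$ depending only on $\phi$ makes your second regime empty. The failure is not merely technical: taking $t_1\approx\tfrac32 R^{-1}$, $t_1'=-t_1$, $t_2=c$ with $|c|\leq R^{-1/2}$ and $t_2'=c+(t_1-t_1')$, both coordinates of $\gamma(t_1)+\gamma(t_2)-\gamma(t_1')-\gamma(t_2')$ are $O(R^{-5/2})$, so for large $R$ one gets far more than $625$ pairs of caps admitting points that satisfy \eqref{close_points:KdV:convex} --- and it is that ``some points'' version of the count which feeds into Lemma~\ref{lemma:orthogonality} and the argument of Section~\ref{section:CauchySchwarz}.

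For comparison, the paper is terse at exactly this point: it declares the remaining analysis ``identical'' to Proposition~\ref{prop:KdV}, where the constant arises as an infimum over $(t_1,t_1')$ of minima of the rootless factor, and the positivity of that infimum is precisely what strict convexity alone does not supply. So you have correctly located the crux, but your proposal does not close it, and the compactness argument you sketch is false as stated. What would rescue the scheme is a quantitative convexity hypothesis --- for instance $\phi\in C^2(3\roi)$ with $\phi''$ bounded away from zero (the $k=2$ case of Proposition~\ref{prop:KdV:real}), or an assumption of the shape $\phi'(y)-\phi'(x)\geq c\,(y-x)$ for $y>x$ in $3\roi$ --- under which $|\psi|\geq c>0$ holds outright and your choice $C_\phi<m_\phi/(1+M_\phi)$ goes through.
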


In turn, Proposition~\ref{prop:KdV:convex} is proved similarly to Propositions~\ref{prop:KdV} and \ref{prop:KdV:real} once we have established the following proxy for Lemma~\ref{lemma:non-deg_implies_definite_SOD}.

\begin{lemma}\label{lemma:convexity_implies_definite_SOD}
Suppose that $\phi$ is differentiable everywhere in $3\roi$ and is strictly convex or strictly concave on $3\roi$ with trivariate S.O.D.ing function $\psi(X,Y,Z)$. If $t_1, t_1' \in \roi$ with $t_1 \neq t_1'$, then the univariate S.O.D.ing function $\psi(t_1,Y,t_1')$ has no zeros on $\roi\setminus\{t_1'\}$. 
\end{lemma}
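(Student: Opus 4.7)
The plan is to reduce the non-vanishing of $\psi(t_1, Y, t_1')$ on $\roi \setminus \{t_1'\}$ to the strict monotonicity of an auxiliary one-variable function. By the defining identity~\eqref{def:SOD}, for $t_1 \neq t_1'$ and $Y \neq t_1'$ the condition $\psi(t_1, Y, t_1') = 0$ is equivalent to $\phi(t_1 + Y - t_1') - \phi(Y) = \phi(t_1) - \phi(t_1')$. Setting $h := t_1 - t_1' \neq 0$ and $F(y) := \phi(y + h) - \phi(y)$, this becomes $F(Y) = F(t_1')$. It therefore suffices to show that $F$ is strictly monotonic on $3\roi$, for then $Y = t_1'$ is the unique solution.

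Without loss of generality assume that $\phi$ is strictly convex on $3\roi$ (otherwise replace $\phi$ by $-\phi$). The monotonicity of $F$ will be deduced from the three-slope inequality characterising strict convexity: whenever $x < y < z$ in $3\roi$,
\[
\frac{\phi(y) - \phi(x)}{y - x} < \frac{\phi(z) - \phi(y)}{z - y}.
\]
Fix $y_1 < y_2$ in $3\roi$ and suppose first that $h > 0$. Proving $F(y_1) < F(y_2)$ amounts to showing that the chord of $\phi$ over $[y_1, y_1 + h]$ has strictly smaller slope than the chord over $[y_2, y_2 + h]$, since both intervals have the same length $h$. The main (minor) obstacle is a short case analysis on the relative positions of the four points $y_1, y_2, y_1 + h, y_2 + h$: if the two intervals are disjoint or meet only at a single endpoint, two applications of the three-slope inequality with an interior point suffice; if instead they overlap, i.e.\ $y_2 < y_1 + h$, then the three-slope inequality applied successively at $y_2$ and at $y_1 + h$ yields the chain
\[
\mathrm{slope}(y_1, y_1 + h) < \mathrm{slope}(y_2, y_1 + h) < \mathrm{slope}(y_2, y_2 + h),
\]
as required.

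The case $h < 0$ reduces to the previous one: after the substitution $z = y + h$ one identifies $F(y)$ with the negative of a strictly increasing function of $y$, so $F$ is strictly decreasing. In either case $F$ is strictly monotonic on $3\roi$, and the lemma follows. Note that differentiability of $\phi$ is not used anywhere in this argument; it enters the proof of Proposition~\ref{prop:KdV:convex} only through the Taylor-type identity~\eqref{approx:Taylor} when passing from the linear displacement $\delta$ back to the higher-degree equation.
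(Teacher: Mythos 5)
Your proof is correct, but it takes a genuinely different route from the paper. The paper argues by contradiction via Rolle's Theorem: a nontrivial zero of $\psi(t_1,Y,t_1')$ gives the function $f(Y) := \phi(t_1+Y-t_1')-\phi(t_1)-\phi(Y)+\phi(t_1')$ two distinct zeros, so $f'(\zeta)=\phi'(t_1+\zeta-t_1')-\phi'(\zeta)=0$ for some $\zeta$, contradicting the strict monotonicity of $\phi'$ that strict convexity plus differentiability provides. You instead rewrite the zero condition (for $Y\neq t_1'$, $t_1\neq t_1'$) as $F(Y)=F(t_1')$ with $F(y)=\phi(y+h)-\phi(y)$, $h=t_1-t_1'$, and prove strict monotonicity of $F$ directly from the three-slope inequality, interpreting $F(y)/h$ as the slope of a chord of fixed width; your case analysis (disjoint versus overlapping intervals) and the chain $\mathrm{slope}(y_1,y_1+h)<\mathrm{slope}(y_2,y_1+h)<\mathrm{slope}(y_2,y_2+h)$ are exactly right, as is the reduction of $h<0$ to $h>0$. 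What each approach buys: the paper's argument is shorter and deliberately parallel to the Rolle-based proof of Lemma~\ref{lemma:non-deg_implies_definite_SOD}, fitting the ``Rolle of curvature'' theme; yours is more elementary and slightly more general, since it dispenses with differentiability for this lemma (as you correctly note, differentiability is still needed in the deduction of Proposition~\ref{prop:KdV:convex}, via the estimate \eqref{approx:C1}). One small point of hygiene: $F$ is only defined where both $y$ and $y+h$ lie in $3\roi$, so the monotonicity should be asserted on that set rather than on all of $3\roi$; since $t_1',Y\in\roi$ and $|h|\leq 1$, all points involved, including the intermediate ones in the slope chain, do lie in $3\roi$, so nothing breaks.
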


The deduction of Proposition~\ref{prop:KdV:convex} differs from that of Proposition~\ref{prop:KdV:real} only in one detail, namely our ability to replace \eqref{close_points:KdV:convex} by the inequality 
\begin{equation}\label{estimate:}
|\phi(t_1)+\phi(t_2)-\phi(t_1')-\phi(t_1+t_2-t_1')| 
\lesssim_\phi R^{-2}
.\end{equation}
After replacing $\phi$ by $-\phi$ if necessary, we may suppose that $\phi$ is a strictly convex function, so that in particular $\phi'$ is strictly increasing. 

The linear equation in \eqref{close_points:KdV} implies that for $\delta:=t_1+t_2-t_1'-t_2'$, we have $|\delta| \lesssim_\phi R^{-2}$. 
Since $\phi \in C^1(3\roi)$ and $3\roi$ is compact, Taylor's Theorem implies that 
\begin{equation}\label{approx:C1}
|\phi(t_1+t_2-t_1'-\delta) - \phi(t_1+t_2-t_1')| 
\lesssim |\delta| 
.\end{equation}
The implicit constant is uniform in all relevant parameters. 
The estimate \eqref{approx:C1} together with the bound on $\delta$ and the triangle inequality imply that 
\[
|\phi(t_1)+\phi(t_2)-\phi(t_1')-\phi(t_1+t_2-t_1')| 
\lesssim_\phi R^{-2}
.\]
The remaining analysis following this inequality is identical to that in the proofs of Proposition~\ref{prop:KdV:real} and \ref{prop:KdV} upon replacing Lemma~\ref{lemma:non-deg_implies_definite_SOD} by Lemma~\ref{lemma:convexity_implies_definite_SOD}, so it suffices to prove Lemma~\ref{lemma:convexity_implies_definite_SOD}.

\begin{proof}[Proof of Lemma~\ref{lemma:convexity_implies_definite_SOD}]
Fix $t_1,t_1' \in \roi$ with $t_1 \neq t_1'$. 
If $Y \neq t_1'$, then the zeros of $\psi(t_1,Y,t_1')$ correspond to the remaining zeros of 
$f(Y) := \phi(t_1+Y-t_1')-\phi(t_1)-\phi(Y)+\phi(t_1')$. 
For the sake of contradiction, suppose that $\psi(t_1,Y,t_1')$ has a zero, then $f(Y)$ is a $C^1(\roi)$-function with two distinct zeros; the second zero being $Y=t_1'$. Rolle's Theorem implies that there exists a point $\zeta \in \roi$ such that 
\[
0 = f'(\zeta) 
= 
\phi'(t_1+\zeta-t_1')-\phi'(\zeta)
.\]
Since $t_1 \neq t_1'$, this is impossible by strict convexity. 
\end{proof}

% % 
\subsection{A complex version}
% % 

In this subsection, we assume that our local field is $\C$, and recall our convention that for a number $z=x+iy \in \C$ we write $|z|=|z|_\C=\max\{|x|_\R,|y|_\R\}$, so that the set $\roi=\{z \in \C: |z|_{\C} \leq 1/2\}$ has the shape of a square. {Once again, we will make use of the sum and difference set $\roi+\roi-\roi=3\roi$.} 

Based on Rouche's Theorem from complex analysis, one might expect that Theorem~\ref{theorem:main:2D} over $\C$ may extend to analytic functions which are perturbations of polynomials in the domain $\roi$. The putative difficulty in making this rigorous is relating the zeros of an analytic function to those of its S.O.D.ing function. 
Fortunately, there is a sufficient surrogate for Rolle's Theorem over $\C$ given by the Voorhoeve index introduced in \cite{Voorhoeve}. This gives us the following theorem. 
\begin{theorem}\label{theorem:complex}
Let $\mydegree \geq 2$, $\beta>0$ and $\phi$ be an analytic function on the square $3\roi \subset \C$ such that $\phi^{(\mydegree-1)}$ has no zeros on the boundary of the square $\roi$, $|\phi^{(\mydegree)}| \geq \beta$ on $\roi$ and $|\phi^{(\mydegree+1)}(t)| < \frac{\beta}{2\sqrt{2}}$ for all $t \in 3\roi$. Define $\gamma(T)$ to be the planar curve $(T,\phi(T))$. 
There exists a positive constant $C_{\phi}$, depending only on $\phi$, such that for all $R \in \N$ and all balls $B$ of width at least $C_{\phi} R^2$ in $\C^2$, we have the inequality 
\begin{equation*}
\| E_{\roi} f \|_{L^4(B)} 
\lesssim_{\gamma} \; 
\| S_{\partition_{R^{-1}}(\roi)} f \|_{L^4(B)} 
.\end{equation*}
\end{theorem}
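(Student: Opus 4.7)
The plan is to mirror the proof of Theorem~\ref{theorem:convex}: reduce, via the Cauchy--Schwarz machinery of Section~\ref{section:CauchySchwarz} and a complex version of Proposition~\ref{prop:KdV:real}, to establishing a complex analogue of Lemma~\ref{lemma:non-deg_implies_definite_SOD}. Specifically, I would show that whenever $t_1, t_1' \in \roi$ are distinct, the univariate S.O.D.ing function $\Psi(Y) := \psi(t_1, Y, t_1')$ has at most $k-2$ zeros in $\roi \setminus \{t_1'\}$; equivalently, writing $f(Y) := \phi(t_1+Y-t_1') - \phi(t_1) - \phi(Y) + \phi(t_1')$, that $N(f, \roi) \leq k - 1$.

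The strategy for this zero count is to combine a quantitative non-vanishing estimate for $f^{(k-1)}$ with Voorhoeve's complex substitute for Rolle's theorem. First I would use the integral representation
\[
f^{(k-1)}(Y) = (t_1 - t_1') \int_0^1 \phi^{(k)}\bigl( Y + s(t_1 - t_1') \bigr) \dd{s}
\]
and observe that for $Y \in \roi$ and $s \in [0,1]$, the argument $Y + s(t_1 - t_1')$ remains in the convex square $3\roi$. Combining $|\phi^{(k+1)}| < \beta/(2\sqrt{2})$ on $3\roi$ with the Euclidean diameter bound $|t_1 - t_1'|_\R \leq \sqrt{2}$ shows that the integrand differs from $\phi^{(k)}(Y)$ by at most $\beta/4$, so the assumption $|\phi^{(k)}| \geq \beta$ on $\roi$ forces $|f^{(k-1)}(Y)| \geq (3\beta/4)|t_1 - t_1'|_\R$ there. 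An analogous computation for $f^{(k)}(Y) = (t_1-t_1')\int_0^1 \phi^{(k+1)}(Y+s(t_1-t_1')) \dd{s}$ yields $|f^{(k)}(Y)| \leq (\beta/(2\sqrt{2}))|t_1 - t_1'|_\R$, so the ratio obeys $|f^{(k)}/f^{(k-1)}| \leq \sqrt{2}/3$ uniformly on $\roi$.

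Next I would invoke Voorhoeve's inequality (see \cite{Voorhoeve, KY}): for $F$ analytic in a neighborhood of a bounded convex domain $D \subset \C$ with $F$ non-vanishing on $\partial D$, one has $N(F, D) \leq V(F, \partial D) := \frac{1}{2\pi}\int_{\partial D} |\dd{\arg F}|$, and (for convex $D$ with total boundary turning $2\pi$) $V(F, \partial D) \leq V(F', \partial D) + 1$. Iterating this inequality $k-1$ times on $D = \roi$ yields
\[
N(f, \roi) \leq V(f^{(k-1)}, \partial \roi) + (k-1),
\]
while the pointwise bound $|\dd{\arg f^{(k-1)}}| \leq |f^{(k)}/f^{(k-1)}|\, |\dd{z}|$ together with the perimeter $|\partial \roi| = 4$ yields $V(f^{(k-1)}, \partial \roi) \leq \frac{2\sqrt{2}}{3\pi} < 1$. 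Since $N(f, \roi)$ is an integer strictly less than $k$, this produces the desired bound $N(f, \roi) \leq k - 1$. With the zero count established, the remainder of the argument transcribes verbatim from Proposition~\ref{prop:KdV:real} and Theorem~\ref{theorem:real:2D}, with Rouch\'e-style continuity arguments replacing the real Taylor expansion where needed.

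The main obstacle will be making the iterated Voorhoeve step rigorous, since the intermediate derivatives $f^{(j)}$ for $0 \leq j \leq k-2$ may vanish on $\partial \roi$, rendering their Voorhoeve indices ill-defined. The plan is to circumvent this by enlarging $\roi$ slightly to a square $\roi_{\epsilon}$ chosen generically so that no $f^{(j)}$ vanishes on $\partial \roi_\epsilon$; the assumption that $\phi^{(k-1)}$ has no zeros on $\partial \roi$ is what guarantees $f^{(k-1)}$ remains non-vanishing on $\partial \roi_\epsilon$ for all sufficiently small $\epsilon$, and the strict inequality $V(f^{(k-1)}, \partial \roi_\epsilon) < 1$ (uniform for small $\epsilon$ by continuity of the bounds above) then allows the argument to be run on $\roi_\epsilon$ and the conclusion transferred to $\roi$ by letting $\epsilon \to 0$.
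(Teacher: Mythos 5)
Your proposal follows essentially the same route as the paper's proof: reduce via the Cauchy--Schwarz machinery of Section~\ref{section:CauchySchwarz} to a zero count for $\psi(t_1,Y,t_1')$ on $\roi$, and obtain that count by writing $f^{(\mydegree-1)}$ as an integral of $\phi^{(\mydegree)}$ along a path from $0$ to $t_1-t_1'$, using the hypotheses on $\phi^{(\mydegree)}$ and $\phi^{(\mydegree+1)}$ to force $V(f^{(\mydegree-1)})<1$, and then invoking the Voorhoeve/Khovanskii--Yakovenko inequality together with the argument principle --- exactly as in Lemma~\ref{lemma:non-deg_implies_definite_SOD:complex}. One minor arithmetic point: since the Euclidean diameter of $\roi$ is $\sqrt{2}$, the integrand differs from $\phi^{(\mydegree)}(Y)$ by at most $\beta/2$ rather than $\beta/4$, which still yields $|f^{(\mydegree)}/f^{(\mydegree-1)}|\leq 1/\sqrt{2}$ and hence $V(f^{(\mydegree-1)})\leq \sqrt{2}/\pi<1$, so your conclusion is unaffected.
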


The proof of this theorem follows the same structure as in the proof of Theorem~\ref{theorem:real:2D} upon replacing Lemma~\ref{lemma:non-deg_implies_definite_SOD} by the following lemma. 
\begin{lemma}\label{lemma:non-deg_implies_definite_SOD:complex}
Let $\mydegree \geq 2$, $\phi$ be an analytic function on the square $\roi$ satisfying the hypotheses of Theorem~\ref{theorem:complex} and $\psi(X,Y,Z)$ be the trivariate S.O.D.ing function of $\phi$. 
If $t_1, t_1' \in \roi$ with $t_1 \neq t_1'$, then the univariate S.O.D.ing function $\psi(t_1,Y,t_1')$ has at most $\mydegree-2$ distinct zeros on $\roi\setminus\{t_1'\}$. 
\end{lemma}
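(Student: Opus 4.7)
The proof parallels that of Lemma~\ref{lemma:non-deg_implies_definite_SOD}, substituting the iterated real Rolle theorem with its complex-analytic surrogate, the Voorhoeve/Khovanskii--Yakovenko Rolle-type inequality from \cite{Voorhoeve, KY}. I would first set $f(Y) := \phi(t_1 + Y - t_1') - \phi(t_1) - \phi(Y) + \phi(t_1')$. The identity \eqref{def:SOD} combined with $t_1 \neq t_1'$ shows that the zeros of $\psi(t_1, \cdot, t_1')$ in $\roi \setminus \{t_1'\}$ coincide with those of $f$ there; since $f(t_1') = 0$ in addition, assuming for contradiction that $\psi(t_1, \cdot, t_1')$ has $k-1$ distinct zeros in $\roi \setminus \{t_1'\}$ forces $f$ to have at least $k$ distinct zeros in $\roi$. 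Differentiating termwise yields $f^{(k-1)}(Y) = \phi^{(k-1)}(t_1 + Y - t_1') - \phi^{(k-1)}(Y)$.

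The next step is to iterate the Voorhoeve Rolle-type inequality $k-1$ times on the square $\roi$ in order to extract a point $\zeta \in \roi$ at which $f^{(k-1)}(\zeta) = 0$, and hence $\phi^{(k-1)}(t_1 + \zeta - t_1') = \phi^{(k-1)}(\zeta)$. The hypothesis that $\phi^{(k-1)}$ does not vanish on $\partial \roi$ is used here: it ensures that the Voorhoeve index of $\phi^{(k-1)}$, and therefore of $f^{(k-1)}$, on $\partial \roi$ is well-defined and finite, so that the iterative Rolle-type bookkeeping is legitimate and the captured zero remains inside $\roi$.

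To conclude, since $\zeta \in \roi$ and $t_1 - t_1' \in \roi - \roi$, convexity places the segment from $\zeta$ to $t_1 + \zeta - t_1'$ inside $3\roi$, so I would write
\[
0 = \phi^{(k-1)}(t_1 + \zeta - t_1') - \phi^{(k-1)}(\zeta) = (t_1 - t_1') \int_0^1 \phi^{(k)}(\zeta + s(t_1 - t_1')) \, ds.
\]
Expanding $\phi^{(k)}(\zeta + s(t_1 - t_1'))$ around $\zeta$ via a further integral of $\phi^{(k+1)}$, the integral on the right deviates from $\phi^{(k)}(\zeta)$ by at most $\tfrac12 |t_1 - t_1'|_{\mathrm{Eucl}} \sup_{3\roi} |\phi^{(k+1)}|$. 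Using $|\phi^{(k)}(\zeta)| \geq \beta$ (which crucially requires $\zeta \in \roi$), $\sup_{3\roi}|\phi^{(k+1)}| < \beta/(2\sqrt 2)$, and the bound $|t_1 - t_1'|_{\mathrm{Eucl}} \leq \sqrt 2$ from the Euclidean diameter of $\roi$, the integral has modulus at least $\beta - \beta/4 = 3\beta/4$. Since $t_1 \neq t_1'$, the right hand side is therefore nonzero, yielding the desired contradiction.

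The principal obstacle is the middle step: formulating and inductively applying the Voorhoeve Rolle-type inequality so as to guarantee that, after $k-1$ iterations, a zero of $f^{(k-1)}$ is still captured inside $\roi$ and not in some larger enlargement. It is precisely here that the hypothesis $\phi^{(k-1)} \neq 0$ on $\partial \roi$ is brought into play, to control the boundary Voorhoeve contributions at every stage of the induction.
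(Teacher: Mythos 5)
Your set-up (the function $f(Y) := \phi(t_1+Y-t_1')-\phi(t_1)-\phi(Y)+\phi(t_1')$ and the reduction via \eqref{def:SOD} to counting zeros of $f$ in $\roi$) matches the paper, but the middle step of your argument is a genuine gap, not just a technical obstacle to be smoothed over. There is no complex-analytic Rolle statement that ``extracts a point $\zeta \in \roi$ with $f^{(\mydegree-1)}(\zeta)=0$'' from the existence of $\mydegree$ distinct zeros of $f$: over $\C$ a function can have arbitrarily many zeros in a convex compact set while its derivative never vanishes (e.g.\ $e^{\lambda Y}-1$ with $\lambda$ large, whose derivative is zero-free). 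The Voorhoeve/Khovanskii--Yakovenko result is not an existence statement for zeros of derivatives; it is an inequality between boundary quantities, namely $V(f) \leq V(f')+1$ for the Voorhoeve index $V(f)=\frac{1}{2\pi}\oint |\partial_t \Arg f(t)|\dd{t}$ along $\partial\roi$, combined with the Argument Principle bound (number of zeros of $f$ in $\roi$) $\leq V(f)$. Consequently, even if you succeeded in showing that $f^{(\mydegree-1)}$ is nowhere zero on $\roi$ (which your final estimate essentially does, and which is true under the hypotheses), that fact alone does not bound the number of zeros of $f$, so your contradiction never materializes.

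The paper's proof runs the logic in the opposite, and correct, direction: it bounds the \emph{index} $V(f^{(\mydegree-1)})$ directly, by writing $f^{(\mydegree-1)}(Y)=\int_{\mathcal C}\phi^{(\mydegree)}(Y+s)\dd{s}$ over a path from $0$ to $t_1-t_1'$ and estimating $\big|\ip\big(f^{(\mydegree)}(Y)/f^{(\mydegree-1)}(Y)\big)\big|$ on $\partial\roi$; the hypotheses $|\phi^{(\mydegree)}|\geq\beta$ on $\roi$ and $|\phi^{(\mydegree+1)}|<\beta/(2\sqrt2)$ on $3\roi$ give a lower bound on the denominator and an upper bound on the numerator, yielding $V(f^{(\mydegree-1)})\leq \sqrt2/\pi<1$. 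Iterating the index inequality then gives $V(f)<\mydegree$, and the Argument Principle bounds the zero count of $f$ by $\mydegree-1$, hence $\psi(t_1,Y,t_1')$ has at most $\mydegree-2$ zeros. Your numerator/denominator estimates are essentially the right computations, but to repair the proof you must redirect them at bounding the boundary index of $f^{(\mydegree-1)}$ (as in the paper) rather than at producing or excluding an interior zero of $f^{(\mydegree-1)}$, since only the index interacts with the zero count of $f$ through the Khovanskii--Yakovenko inequality.
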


\begin{proof}
In this proof, let $\|\cdot\|$ denote the Euclidean norm on $\C$. This is different to the norm $|\cdot|_\C$ used previously and below. 
For meromorphic functions $f$ defined in an open neighborhood of $\roi$, the Voorhoeve Index of $f$ is defined as 
\[
V(f) 
:= 
\frac{1}{2\pi} \oint \big| \partial_t \Arg f(t) \big| \dd{t}
= 
\frac{1}{2\pi} \oint \big| \ip\bigg(\frac{f'(t)}{f(t)} \bigg) \big| \dd{t},
\]
where the notation $\oint$ indicates that the integral runs over the boundary of the square $\roi$. 
Fix $t_1,t_1' \in \roi$ with $t_1 \neq t_1'$ and define $f(Y) := \phi(t_1+Y-t_1')-\phi(t_1)-\phi(Y)+\phi(t_1')$. 
We claim that 
\begin{equation}\label{estimate:VI}
V(f^{(\mydegree-1)}) < 1. 
\end{equation}
Assuming that this is true, it follows from Corollary~2 on page~107 of \cite{KY} that $V(f) < \mydegree$. The Argument Principle then implies that the number of zeros of $f$ is at most $\mydegree-1$, and hence that $\psi(t_1,Y,t_1')$ has at most $\mydegree-2$ distinct zeros. See \cite{Voorhoeve,KY} for a discussion of how the Argument Principle applies. 

It remains to prove \eqref{estimate:VI}. From the fundamental theorem of calculus, we have that 
\begin{align*}
f^{(\mydegree-1)}(Y) 
& = 
\phi^{(\mydegree-1)}(t_1+Y-t_1')-\phi^{(\mydegree-1)}(Y)
= 
\int_{\mathcal{C}} \phi^{(\mydegree)}(Y+s) \dd{s}
\end{align*}
for any simple curve $\mathcal{C} \subset 2\roi$ running from $0$ to $t_1-t_1'$. We fix such a curve and denote its length by $\ell$. 
Estimating the imaginary part trivially and inserting the above equation, we find 
\begin{align}\label{eq:Voorhoeve}
V(f^{(\mydegree-1)}) 
& = 
\frac{1}{2\pi} \oint \left| \ip\bigg(\frac{f^{(\mydegree)}(Y)}{f^{(\mydegree-1)}(Y)} \bigg) \right| \dd{Y} 
\leq 
\frac{1}{2\pi} \oint \bigg\| \frac{f^{(\mydegree)}(Y)}{f^{(\mydegree-1)}(Y)} \bigg\| \dd{Y} \nonumber
\\&= 
\frac{1}{2\pi} \oint \bigg\|  \frac{\phi^{(\mydegree)}(t_1+Y-t_1')-\phi^{(\mydegree)}(Y)}{\int_{\mathcal{C}} \phi^{(\mydegree)}(Y+s)\dd{s}}  \bigg\| \dd{Y}.
\end{align}
By Taylor's theorem, we have $\phi^{(\mydegree)}(Y+s)=\phi^{(\mydegree)}(Y)+\rho_Y(s)$ with some remainder $\rho_Y(s)$ satisfying 
\[
\|\rho_Y(s)\| 
\leq 
\sup_{x \in 2\roi} \|\phi^{(k+1)}(x)\| \|s\| \leq \sqrt 2\sup_{x \in 2\roi} \|\phi^{(k+1)}(x)\|
\] 
for all $s \in 2\roi$. Thus, we can bound the denominator in \eqref{eq:Voorhoeve} from below via
\begin{align}\label{eq:Voorhoeve_denominator}
\left\| \int_{\mathcal{C}} \phi^{(\mydegree)}(Y+s)\dd{s} \right\| 
&= 
\left\| \int_{\mathcal{C}} \phi^{(\mydegree)}(Y)+ \rho_Y(s)\dd{s} \right\| \nonumber
\\& \geq 
\left\| \int_{\mathcal{C}} \phi^{(\mydegree)}(Y) \dd{s} \right\| - \left\| \int_{\mathcal{C}} \rho_Y(s)\dd{s} \right\| \nonumber
\\&\geq 
\ell \left( \|\phi^{(\mydegree)}(Y)\| - \sqrt 2\sup_{x \in 2\roi} \|\phi^{(k+1)}(x)\|\right) \nonumber
\\&\geq \frac12  \ell  \| \phi^{(\mydegree)}(Y)\|
\end{align}
whenever 
\begin{equation}\label{eq:sup_vs_inf}
\sqrt 2\sup_{x \in 2\roi} \|\phi^{(k+1)}(x)\| 
\leq 
\frac{1}{2} \inf_{|Y|_{\C}=1/2}\| \phi^{(\mydegree)}(Y)\|.
\end{equation}
This last inequality follows from the hypotheses of the lemma.

Meanwhile, using the fundamental theorem of calculus as above, we can estimate the numerator in \eqref{eq:Voorhoeve} as 
\begin{align}\label{eq:Voorhoeve_numerator}
\|\phi^{(\mydegree)}(t_1+Y-t_1')-\phi^{(\mydegree)}(Y)\| 
&= 
\left\|\int_{\mathcal{C}} \phi^{(\mydegree+1)}(Y+s) \dd{s}\right\| 
\leq 
\ell\sup_{s \in \mathcal{C}}\|\phi^{(\mydegree+1)}(Y+s)\| 
.\end{align}
Now, upon inserting both \eqref{eq:Voorhoeve_denominator} and \eqref{eq:Voorhoeve_numerator} into \eqref{eq:Voorhoeve}, and noting that the perimeter of the square $\roi$ has length $4$, we find
\begin{align*}
V(f^{(k-1)}) 
&\leq 
\frac{1}{2\pi} \oint \frac{\ell\sup_{s \in \mathcal{C}} \|\phi^{(\mydegree+1)}(s)\| }{\frac12  \ell  \| \phi^{(\mydegree)}(Y)\|} \dd{Y}
\leq 
\frac{4}{\pi}\frac{\sup_{s \in 3\roi} \|\phi^{(\mydegree+1)}(s)\|}{\inf_{|Y|_{\C}=1/2}\| \phi^{(\mydegree)}(Y)\|}
\leq 
\frac{\sqrt{2}}{\pi}
,\end{align*}
where in the last step we used the bound \eqref{eq:sup_vs_inf}.
\end{proof}

% % % 
\section{The Rolle of curvature II: Non-Archimedean fields}\label{section:Rolle}
% % % 

It would be nice to have a version of Theorem~\ref{theorem:real:2D} for $\oddprime$-adic curves when $k \geq 3$.  Our proof of Theorem~\ref{theorem:real:2D} relied on  Rolle's theorem. Rolle's theorem and related matters such as the Fundamental Theorem of Calculus, are inherently real phenomena exploiting the ordering of the real line. 
It is known that Rolle's theorem fails for non-Archimedean local fields, even for polynomials. In this section, we recall this failure over the fields $\Q_p$ and augment it to demonstrate that Lagrange interpolation with derivatives fails for non-Archimedean local fields. 
Going further, we compare the geometries of S.O.D.ing polynomials over $\R$ with those over non-Archimedean local fields. Recall that Proposition~\ref{prop:KdV:real} over $\R$ says that if $\phi \in C^2(\R)$ with $\phi'' \neq 0$ on $[-1,1]$, then its S.O.D.ing function has no non-trivial zeros on $\roi^3$. This is far from true over non-Archimedean fields where the S.O.D.ing polynomial may have many zeros near the origin despite $\phi'' \neq 0$ on all of $\roi$. These phenomena present technical obstructions in extending our methods to obtain a `smooth' $\oddprime$-adic version of Theorem~\ref{theorem:real:2D}. 
In our following arguments, we will focus on the case $K=\Q_p$ for some rational prime $p$.

% % 
\subsection{Lagrange interpolation for non-Archimedean local fields}
% % 

We commence with the following example from \cite{Katok} which shows the failure of Rolle's theorem for non-Archimedean fields. 
Fix a finite, rational prime $p$ and let $g(X):=X^p-X$. We have $g(0)=g(1)=0$, whereas the derivative is $g'(X)=p X^{p-1}-1$, so that $g'(x) \in -1+p \Z_p$ for all $x \in \Z_p$. Hence, $g'(x) \neq 0$ for all $x \in \Z_p$. 

Our next proposition builds on this observation to show that, while Lagrange interpolation holds for local fields (indeed, it holds for any field), the additional real phenomenon \eqref{eq:matching_derivatives} about matching derivatives fails for non-Archimedean local fields. 
To set the scene, we recall Lagrange interpolation for the reader's convenience. 
\begin{Lagrange}
Fix $k\in\N$ and $\lf$ a field. If $a_0,\dots,a_k \in \lf$ are distinct points and $b_0,\dots,b_k$ are values in $\lf$, then there exists a polynomial $P(X) \in \lf[X]$, depending on the points $a_0,\dots,a_k$ and values $b_0,\dots,b_k$, such that 
\[ P(a_i)=b_i \quad \text{for all} \quad 0 \leq i \leq k .\]
In fact, the polynomial 
\begin{equation}\label{LI}
P_{\vof{a},\vof{b}}(X) 
:= 
\sum_{m=0}^{k} b_m \bigg( \prod_{\substack{i=0\\i \neq m}}^k \frac{X-a_i}{a_m-a_i} \bigg) 
\end{equation}
is the unique such polynomial of degree at most $k$. 

Moreover, suppose that $\lf=\R$ and $\phi \in C^{k}(\roi)$. If $a_0,\dots,a_k \in \R$ are any distinct points and $P \in \R[X]$ is such that 
\( P(a_i)=\phi(a_i) \)
for all \( 0 \leq i \leq k \), 
then there exists a point $\zeta \in \roi$ such that 
\begin{equation}\label{eq:matching_derivatives} 
P^{(k)}(\zeta)=\phi^{(k)}(\zeta) 
.\end{equation}
\end{Lagrange}

The proof of the first part is obvious once \eqref{LI} is written down. The proof of \eqref{eq:matching_derivatives} follows from the iterated version of Rolle's Theorem in Section~\ref{section:real}. 

\begin{proposition}\label{prop:Rolle_fails}
Suppose that $p>2$ is a finite, rational prime. Define $f(X):=X^{p+1}-X^2$. If $q(X) \in \Q_p[X]$ is a quadratic polynomial such that $f(0)=q(0)$, $f(1)=q(1)$, $f(-1)=q(-1)$, then $q(X) \equiv 0$ is the 0-polynomial. 
In particular, there is no point $\zeta \in \rroi$ such that $f''(\zeta)=q''(\zeta)$. 
\end{proposition}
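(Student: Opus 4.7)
The plan is to verify the hypotheses of the uniqueness part of Lagrange interpolation in order to force $q(X)$ to be identically zero, and then to compute $f''$ directly and use $p$-adic valuations to rule out any coincidence of derivatives on $\rroi$. First I would observe that since $p>2$ is odd, $p+1$ is even, so $f(-1) = (-1)^{p+1} - 1 = 0$. Together with the immediate identities $f(0) = 0$ and $f(1) = 1 - 1 = 0$, this means $q(X)$ is a polynomial of degree at most $2$ agreeing with the zero polynomial at the three distinct points $-1, 0, 1 \in \Q_p$. The uniqueness statement in Lagrange Interpolation (applied with $k=2$ and $\lf = \Q_p$) then forces $q(X) \equiv 0$, and in particular $q''(X) \equiv 0$.

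For the final clause, it remains to show that $f''(\zeta) \neq 0$ for every $\zeta \in \rroi$. Differentiating twice gives $f''(X) = (p+1)p\, X^{p-1} - 2$. I would then perform a valuation computation: for $\zeta \in \rroi$, we have $v_p((p+1)p\,\zeta^{p-1}) \geq v_p(p) = 1$ since $p+1$ is a unit in $\rroi$ and $v_p(\zeta^{p-1}) \geq 0$. On the other hand $v_p(2) = 0$ because $p > 2$. By the ultrametric inequality, $v_p(f''(\zeta)) = \min\{v_p((p+1)p\,\zeta^{p-1}),\, v_p(2)\} = 0$, so $f''(\zeta)$ is a $p$-adic unit, and in particular nonzero. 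This rules out the existence of any $\zeta \in \rroi$ with $f''(\zeta) = q''(\zeta) = 0$.

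There is no genuine obstacle in the sense of a difficult estimate; the work is entirely in having chosen the correct example. The polynomial $f(X) = X^{p+1} - X^2 = X^2(X^{p-1}-1)$ is engineered so that $X^{p-1}-1$ vanishes at $\pm 1$ (by Fermat's little theorem lifted over the three specific points $-1,0,1$), which provides a triple of interpolation nodes whose quadratic Lagrange interpolant is trivial, while the $X^{p+1}$ term produces a second derivative carrying an unavoidable factor of $p$ that the constant $-2$ cannot cancel on $\rroi$. The contrast with the real iterated Rolle's theorem is precisely that the non-Archimedean metric makes $p$ small rather than irrelevant, so the would-be matching point $\zeta$ is pushed off the unit ball entirely.
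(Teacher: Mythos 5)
Your proposal is correct and follows essentially the same route as the paper: both arguments note that $f$ vanishes at $0, \pm 1$ so that the quadratic interpolant must be the zero polynomial (you cite uniqueness in Lagrange interpolation where the paper solves the small linear system explicitly), and both conclude via the ultrametric observation that $f''(X)=(p+1)pX^{p-1}-2$ is a $p$-adic unit on $\rroi$ for $p>2$.
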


Observe that $|f''(t)| = 1$ for $t\in\Z_p$. So, the graph $(X,f(X))$ is a non-degenerate curve over $\Z_p$, and this proposition presents a technical obstruction in proving an analogue of Theorem~\ref{theorem:convex} for local fields.  

\begin{proof}[Proof of Proposition~\ref{prop:Rolle_fails}]
Let $f(X) = X^{p+1}-X^2$. Suppose we have a quadratic approximation $q(X)$ such that 
\[ 
q(0)=f(0)=q(1)=f(1)=q(-1)=f(-1)=0 .
\] 
Writing $q(X)=b_2X^2+b_1X+b_0$, the above may be written as the system of linear equations 
\[
\begin{pmatrix}
0 \\ 0 \\ 0 
\end{pmatrix}
= 
\begin{pmatrix}
1 & 0 & 0
\\
1 & 1 & 1 
\\ 
1 & -1 & 1 
\end{pmatrix}
\begin{pmatrix}
b_0 \\ b_1 \\ b_2 
\end{pmatrix}
.\]
The first equation tells us that $b_0=0$. 
Then the second equation tells us that $b_1+b_2=0$ while the last equation tells us that $-b_1+b_2=0$. Therefore, $b_1=b_2=0$ and $q(X)$ must be the 0-polynomial. 
The 0-polynomial has second derivative 0 everywhere, but $|f''(X)|=1$ for all $x \in \Z_p$ and $p>2$. 
\end{proof}

% % 
\subsection{Non-degenerate curves and the geometry of $p$-adic S.O.D.ing polynomials}
% % 

The analogue of Lemma~\ref{lemma:non-deg_implies_definite_SOD} fails for non-Archimedean fields. 
To warm-up, take $p=2$ and consider the curve $\gamma(X) := (X,\phi(X))$ where $\phi(X):=X^3-X^2/2$. 
This curve is non-degenerate on $\Z_2$ (but not on $2^{-1}\Z_2$) because $\phi''(x) \in -1+2\Z_2$ for all $x \in \Z_2$. Our second order difference polynomial for $\phi$ is 
\(
\psi(X,Y,Z) =
3(X+Y)-1
.\) 
Since $3$ is a unit in $\Z_2$, we see that this S.O.D.ing polynomial vanishes on a line in $\Z_2^2$. The following proposition extends this failure to higher degrees and other finite, rational primes. 

\begin{proposition}\label{prop:JB}
Let $k \geq 4$ be an integer and $p$ an odd, rational prime having the properties that $p \equiv 1 \mmod{k-2}$ and either $k=p+1$ or $k-1$ is not a $(k-2)$th power residue modulo $p$. Then the polynomial 
\( \phi(X) := 2X^k - k X^2 \)
satisfies conditions 
\begin{enumerate}[(A)]
\item\label{cond1} $|\phi''(x)| \geq 1$ on $\Z_p$, and 
\item\label{cond2} the second order difference polynomial $\psi(x,Y,z)$ of $\phi$ splits completely over $\Z_p$ for all $x, z \in p\Z_p$. 
\end{enumerate}
\end{proposition}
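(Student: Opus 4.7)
The overall plan is to compute the S.O.D.ing polynomial of $\phi(X) = 2X^k - kX^2$ explicitly, verify by direct inspection that $\phi''$ is a $p$-adic unit on $\Z_p$, then reduce $\psi_\phi(x, Y, z) \bmod p$ for $x, z \in p\Z_p$ to a polynomial that is forced to split completely, and finally lift via Hensel's lemma.

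First I would use linearity of the S.O.D.ing construction together with the basic identity $\psi_{X^2}=2$ to write $\psi_\phi = 2\psi_{X^k} - 2k$. A short multinomial expansion (or, equivalently, the integral representation $\psi_{X^k} = k(k-1)\int_0^1\int_0^1 (Z+s(X-Z)+t(Y-Z))^{k-2}\,ds\,dt$ read as a polynomial identity) yields the closed form
\[
\psi_{X^k}(X,Y,Z) = \sum_{\substack{a+b+c=k\\ a,b\geq 1}} \binom{k}{a,b,c}(X-Z)^{a-1}(Y-Z)^{b-1}Z^c.
\]

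For \textbf{(A)}, I would compute $\phi''(X) = 2k[(k-1)X^{k-2}-1]$ and check that both $2k$ and $(k-1)x^{k-2}-1$ are $p$-adic units for every $x \in \Z_p$. The hypothesis $p \equiv 1 \mmod{k-2}$ combined with $k \geq 4$ forces $k \leq p+1$, and in fact rules out $k = p$ (for else $p-2 \mid p-1$, impossible when $k\geq 4$), so $p \nmid 2k$. The non-congruence $(k-1)x^{k-2} \not\equiv 1 \mmod{p}$ is trivial when $x \in p\Z_p$, and for $x \in \Z_p^\times$ it splits according to the two sub-hypotheses: if $k=p+1$ then $k-1 = p$ and the congruence reduces to $0 \equiv 1$, whereas otherwise the assumption that $k-1$ is not a $(k-2)$th power residue mod $p$ is precisely the statement that $(k-1)^{-1}$ lies outside the image of $x \mapsto x^{k-2}$ on $\F_p^\times$ (a proper subgroup of index $k-2$).

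For \textbf{(B)}, fix $x, z \in p\Z_p$. Every term in the closed form above with $a \geq 2$ or $c \geq 1$ acquires an explicit factor of $p$ from $(x-z)^{a-1}$ or $z^c$, leaving only the term $(a,b,c)=(1,k-1,0)$ upon reduction; this gives
\[
\psi_\phi(x,Y,z) \equiv 2k\bigl((Y-z)^{k-2}-1\bigr) \equiv 2k\bigl(Y^{k-2}-1\bigr) \mmod{p}.
\]
The hypothesis $p \equiv 1 \mmod{k-2}$ (together with the automatic bound $p > k-2$) provides $k-2$ distinct $(k-2)$th roots of unity in $\F_p^\times$, so $Y^{k-2}-1$ splits into $k-2$ distinct linear factors over $\F_p$. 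Since the leading coefficient $2k$ is a unit by \textbf{(A)}, the reduction is separable and totally split, and Hensel's lemma lifts each simple root to a unique root in $\Z_p$, giving the complete factorization required in \textbf{(B)}.

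The main bookkeeping obstacle is the case analysis in the proof of \textbf{(A)}, where one must handle both sub-cases of the hypothesis simultaneously to secure $p \nmid k$ and the non-congruence $(k-1)x^{k-2}\not\equiv 1$ uniformly in $x \in \Z_p$; once these points are settled, the reduction mod $p$ of $\psi_\phi$ collapses cleanly to $2k(Y^{k-2}-1)$ and the subsequent Hensel lift is mechanical.
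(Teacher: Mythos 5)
Your proposal is correct and follows essentially the same route as the paper: the same computation and case analysis for condition \ref{cond1}, and for \ref{cond2} the reduction $\psi(x,Y,z)\equiv 2k(Y^{k-2}-1)\mmod{p}$ for $x,z\in p\Z_p$, followed by distinctness of the $(k-2)$th roots of unity and a Hensel lift. The only (cosmetic) differences are that you obtain the mod-$p$ reduction from an explicit multinomial closed form for $\psi_{X^k}$ rather than via the paper's Lemma~\ref{diff-poly} combined with the integrality of the coefficients of $\psi$, and that you verify $p\nmid 2k$ explicitly where the paper merely asserts it.
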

In particular, whenever $p$ and $k$ satisfy the hypotheses of Proposition \ref{prop:JB}, the curve $\gamma(T)=(T, 2T^k-kT^2)$ is non-degenerate in $\Z_p$, and yet its S.O.D.ing polynomial factors completely over $\Z_p$. This is in fact not a rare occurrence. Indeed, as the ensuing corollary demonstrates, the sufficient condition in the previous proposition is satisfied infinitely often as $k$ tends to infinity. 
\begin{corollary}\label{cor:sufficient_primes}
The conditions \ref{cond1} and \ref{cond2} are satisfied for $\phi(X)$, as defined in Proposition~\ref{prop:JB}, in each of the following cases:
\begin{enumerate}
\item $k=4$ and $p \equiv \pm 5 \mmod{12}$
\item $p$ is an arbitrary odd prime and $k =p + 1$. 
\end{enumerate}
Moreover, for every $k \geq 3$ such that $k-1$ is square-free, the polynomial $\phi(X)$ satisfies \ref{cond1} and \ref{cond2} modulo infinitely many primes $p$, provided we assume the validity of the Generalized Riemann Hypothesis.
\end{corollary}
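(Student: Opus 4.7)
Parts (1) and (2) of the corollary are direct verifications of the hypothesis of Proposition~\ref{prop:JB}. For \emph{(1)}, with $k=4$ the congruence $p \equiv 1 \pmod{k-2}$ simply asks that $p$ be odd, while the alternative $k = p+1$ only concerns $p = 3$; the substantive requirement is that $k-1 = 3$ fails to be a quadratic residue modulo $p$. Applying quadratic reciprocity in the form $\left(\tfrac{3}{p}\right) = \left(\tfrac{p}{3}\right) (-1)^{(p-1)/2}$ and tabulating over the four residue classes of $p$ modulo $12$ shows $\left(\tfrac{3}{p}\right) = -1$ precisely for $p \equiv \pm 5 \pmod{12}$. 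For \emph{(2)}, the congruence $p \equiv 1 \pmod{p-1}$ is automatic and the disjunct ``$k = p+1$'' in the hypothesis of Proposition~\ref{prop:JB} holds by assumption, so the proposition applies immediately.

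For \emph{(3)}, fix $k \geq 4$ with $n := k-1$ squarefree and set $d := k-2 \geq 2$. The task is to exhibit infinitely many primes $p$ with $p \equiv 1 \pmod d$ and $n$ not a $d$th power residue modulo $p$. The natural route is to apply the Chebotarev density theorem to the Kummer extension $L := \mathbb{Q}(\zeta_d, n^{1/d})$. A prime $p$ splits completely in $\mathbb{Q}(\zeta_d)$ iff $p \equiv 1 \pmod d$, and such a prime splits completely in $L$ iff additionally $n$ is a $d$th power residue modulo $p$. Consequently, the density of primes meeting our requirements equals $\tfrac{1}{\varphi(d)} - \tfrac{1}{[L:\mathbb{Q}]}$, which is positive as soon as $L \neq \mathbb{Q}(\zeta_d)$.

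The main algebraic obstacle is therefore to verify $[L : \mathbb{Q}(\zeta_d)] = d$, equivalently that $X^d - n$ remains irreducible over the cyclotomic field $\mathbb{Q}(\zeta_d)$. By Capelli's theorem, it suffices to show that $n$ is not an $\ell$th power in $\mathbb{Q}(\zeta_d)$ for any prime $\ell \mid d$ (together with an auxiliary fourth-power condition when $4 \mid d$). For odd $\ell$, the field $\mathbb{Q}(n^{1/\ell})$ is non-abelian over $\mathbb{Q}$ unless $n \in (\mathbb{Q}^\times)^\ell$, which the squarefreeness of $n > 1$ precludes; hence $\mathbb{Q}(n^{1/\ell})$ cannot embed in the abelian extension $\mathbb{Q}(\zeta_d)$. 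For $\ell = 2$, the classification of quadratic subfields of cyclotomic fields by conductor shows $\sqrt n \in \mathbb{Q}(\zeta_d)$ only if the discriminant of $\mathbb{Q}(\sqrt n)$ divides $d$, which fails because $n = k-1 > k-2 = d$. The fourth-power case is dispatched similarly, as $(-4n)^{1/4} \in \mathbb{Q}(\zeta_d)$ would entail $\sqrt n \in \mathbb{Q}(\zeta_d)$. With $[L:\mathbb{Q}] = d\,\varphi(d)$ secured, Chebotarev yields a positive density $\tfrac{d-1}{d\,\varphi(d)}$ of primes of the desired type, and in particular infinitely many. The qualitative conclusion is thus unconditional; the invocation of GRH in the corollary is presumably to extract an effective quantitative form, e.g.\ via a conditional extension of Hooley's treatment of Artin's primitive root conjecture to arithmetic progressions.
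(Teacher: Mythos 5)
Your parts (1) and (2) coincide with the paper's own verification (the paper simply asserts that $3$ is a quadratic non-residue for $p\equiv\pm5\pmod{12}$ and that the hypothesis of Proposition~\ref{prop:JB} is trivially met when $k=p+1$; your reciprocity computation fills this in). The interesting divergence is in the final statement. The paper disposes of it in one line by citing Theorem~1.1 of Moree's paper on near-primitive roots, which is where the GRH hypothesis enters. You instead argue directly and unconditionally: writing $d=k-2$, $n=k-1$, a prime $p\equiv 1\pmod d$ has $n$ as a $d$th power residue precisely when $p$ splits completely in the Kummer field $L=\Q(\zeta_d,n^{1/d})$, so by Dirichlet and Chebotarev the relevant primes have density $\tfrac{1}{\varphi(d)}-\tfrac{1}{[L:\Q]}$, which is positive as soon as $L\neq\Q(\zeta_d)$; you then secure this (indeed the full degree $[L:\Q(\zeta_d)]=d$) via Capelli, using that a degree-$\ell$ radical extension for odd $\ell$ is not Galois and hence cannot sit inside the abelian field $\Q(\zeta_d)$, and that $\sqrt n\notin\Q(\zeta_d)$ because the discriminant of $\Q(\sqrt n)$ is $n$ or $4n$, both exceeding $d=n-1$. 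This is correct (note that positivity only needs $L\neq\Q(\zeta_d)$, so a single prime divisor $\ell\mid d$ already suffices; also the $4\mid d$ case of Capelli is about $n\in-4\Q(\zeta_d)^4$, which gives $\sqrt{-n}\in\Q(\zeta_d)$ and hence, since $i\in\Q(\zeta_d)$, the same contradiction — your phrasing via $(-4n)^{1/4}$ is slightly off but the deduction is the intended one). What the two routes buy: the paper's citation is shorter and, through Moree, comes packaged with exact (conditional) density statements for the finer ``prescribed index'' condition, whereas your argument is self-contained and shows that the qualitative infinitude claim does not require GRH at all, i.e.\ it strengthens the corollary as stated. One small mismatch with the statement: you fix $k\geq4$, while the corollary says $k\geq3$; this is harmless, since $k=3$ lies outside the range of Proposition~\ref{prop:JB} (its hypotheses cannot be satisfied when $k-2=1$), so your restriction matches the proposition's actual scope.
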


\begin{proof}[Proof of Corollary~\ref{cor:sufficient_primes} from Proposition~\ref{prop:JB}]
The condition that $k-1$ be a non-$(k-2)$-power residue is equivalent to the congruence 
\[ (k-1)^{\frac{p-1}{k-2}} \not\equiv 1 \mmod{p} .\]
When $k=4$, the requirement is that $3$ be a quadratic non-residue modulo $p$, which is satisfied whenever $p \equiv \pm 5 \mmod{12}$. 
Similarly, when $k= p + 1$, the congruence above is trivially satisfied. Finally, the last statement follows directly from Theorem~1.1 in \cite{Moree}.
\end{proof}

\begin{remark}The remaining case when $k-1$ is divisible by a square is discussed in greater detail in Theorem 5.1 of \cite{Moree}.
\end{remark}

To prove Proposition~\ref{prop:JB}, we need to utilize some properties of S.O.D.ing polynomials. We record these in the following lemma. 
\begin{lemma}\label{diff-poly}
Let $\phi \in \Z[X]$ be a polynomial of degree $k$, defined by $\phi(X) := \sum_{j=0}^{k} a_j X^j$. If $k \geq 2$, then 
\begin{equation}\label{eq:diff-poly}
\psi(0,Y,0) 
= \sum_{j=0}^{k-2}(j+2)a_{j+2}Y^j 
,\end{equation}
and $\psi(0,Y,0)=0$ for $k=0, 1$.
\end{lemma}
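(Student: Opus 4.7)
The plan is to exploit the defining relation
\[
\phi(X+Y-Z) - \phi(X) - \phi(Y) + \phi(Z) = (X-Z)(Y-Z)\psi(X,Y,Z)
\]
from \eqref{def:SOD}. Direct substitution of $X=Z=0$ is unhelpful: both sides vanish trivially (the LHS because $\phi(Y)-\phi(Y)=0$, the RHS because of the $(X-Z)$ factor). To extract information about $\psi(0,Y,0)$ one has to strip off the $(X-Z)$ factor first, which I would do by differentiating the identity with respect to $X$.

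Concretely, applying $\partial_X$ to both sides of \eqref{def:SOD} yields
\[
\phi'(X+Y-Z) - \phi'(X) = (Y-Z)\psi(X,Y,Z) + (X-Z)(Y-Z)\,\partial_X \psi(X,Y,Z).
\]
Setting $X=Z=0$ collapses the rightmost term and produces the clean identity $\phi'(Y) - \phi'(0) = Y\cdot \psi(0,Y,0)$. Substituting $\phi'(Y) = \sum_{j=1}^{k} j\,a_j Y^{j-1}$ and $\phi'(0) = a_1$, one obtains $\phi'(Y)-\phi'(0) = \sum_{j=2}^{k} j\,a_j Y^{j-1}$; dividing by $Y$ and reindexing $i=j-2$ gives exactly
\[
\psi(0,Y,0) = \sum_{i=0}^{k-2}(i+2)\,a_{i+2}\,Y^{i},
\]
which is \eqref{eq:diff-poly}. (One should briefly justify dividing by $Y$: the power series $\phi'(Y)-\phi'(0)$ has no constant term, so the quotient is a genuine polynomial, and since $\Z[X,Y,Z]$ is an integral domain the identity determines $\psi(0,Y,0)$ uniquely.)

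For the degenerate cases $k=0$ and $k=1$, the function $\phi$ is affine, so the second-order differencing expression $\phi(X+Y-Z)-\phi(X)-\phi(Y)+\phi(Z)$ is identically zero. Since $(X-Z)(Y-Z)$ is a nonzero element of the polynomial ring, the defining relation forces $\psi\equiv 0$, and in particular $\psi(0,Y,0)=0$.

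There is no real obstacle here; the only conceptual point is recognizing that a naive substitution $X=Z=0$ loses all information, and that differentiating once with respect to $X$ before evaluating is the correct maneuver to isolate $\psi(0,Y,0)$. Everything else is a routine manipulation of formal power series.
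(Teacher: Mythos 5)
Your proof is correct, and it meets the paper at the same pivotal identity, namely
\begin{equation*}
Y\,\psi(0,Y,0) \;=\; \phi'(Y)-\phi'(0),
\end{equation*}
but it gets there by a different mechanism. The paper introduces the first-order difference polynomial $\chi(X,Y)$ defined by $\phi(X)-\phi(Y)=(X-Y)\chi(X,Y)$, manipulates divided differences to obtain the more general formula $\psi(X,Y,X)=\bigl(\phi'(Y)-\phi'(X)\bigr)/(Y-X)$, and then specializes to $X=0$, having first reduced to monomials $\phi(X)=X^k$ by linearity. You instead differentiate the defining relation \eqref{def:SOD} formally with respect to $X$ and evaluate at $X=Z=0$, which kills the $(X-Z)\partial_X\psi$ term and isolates $\psi(0,Y,0)$ in one step; you then read off \eqref{eq:diff-poly} directly from the coefficients of $\phi'$ for a general polynomial, with no reduction to monomials needed. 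Your route is a bit more economical (formal differentiation of a polynomial identity is valid over $\Z$, and your justification for dividing by $Y$ via the integral domain structure is exactly what is needed), while the paper's route yields as a byproduct the diagonal identity $\psi(X,Y,X)=\bigl(\phi'(Y)-\phi'(X)\bigr)/(Y-X)$, which is slightly more information than the lemma asks for. Your treatment of the degenerate cases $k=0,1$ (affine $\phi$ forces $\psi\equiv 0$ since $(X-Z)(Y-Z)$ is a nonzerodivisor) is also fine and marginally cleaner than verification by inspection.
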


\begin{proof}[Proof of Lemma~\ref{diff-poly}]

It suffices to prove the identity \eqref{eq:diff-poly} for monomials $\phi(X):=X^k$ for $k \in \N_0$. 
The cases $k=0$ or 1 are easily verified by inspection, so henceforth we assume that $k \geq 2$. 

For a univariate polynomial $\phi(T) \in \lf[T]$ we define the first order difference polynomial $\chi \in \lf[X,Y]$ to be the continuous bivariate polynomial satisfying the relation 
\begin{equation}\label{def:FOD}
{\phi(X)-\phi(Y)} = (X-Y)\chi(X,Y) 
.\end{equation}	
Observe that $\chi(X,X) = \phi'(X)$. When $\phi(X)=X^k$ this can be seen from \eqref{def:FOD} upon applying the Binomial Theorem to $\phi((X-Y)+Y)-\phi(Y)$, and it follows for a general polynomial by linearity. 

Recall the definition \ref{def:SOD}. A modicum of computation shows that
\begin{align*}
(Y-Z)\psi(X,Y,Z) 
&= \frac{\phi(X+Y-Z)-\phi(Y)}{X-Z} -\frac{\phi(X)-\phi(Z)}{X-Z} 
\\ &= \chi(Y,Y+X-Z)-\chi(X,Z) 
.\end{align*}
Taking $X=Z$, we obtain 
\begin{align*}
\psi(X,Y,X) 
&= \frac{\chi(Y,Y)-\chi(X,X)}{Y-X} = \frac{\phi'(Y)-\phi'(X)}{Y-X}
,\end{align*}
whence we see that 
\begin{align*}
\psi(0,Y,0)
= \frac{\phi'(Y)-\phi'(0)}{Y} 
.\end{align*}
Plugging in $\phi(X)=X^k$ for $k \geq 2$, we find that $\psi(0,Y,0)=kY^{k-2}$ as claimed. 
\end{proof}

\begin{proof}[Proof of Proposition \ref{prop:JB}]
We start by proving condition \ref{cond1}. Take $\phi(X)$ as in the statement of the proposition, and compute that 
\[\phi''(X) = 2k[(k-1)X^{k-2} - 1].\] 
If $|\phi''(x)|_{p} < 1$ for some $x \in \rroi$, then $\phi''(x) \equiv 0 \mmod p$. Since $p \nmid 2k$, this implies that we must have $(k-1)X^{k-2} \equiv 1 \mmod p$. This is clearly impossible when $p=k-1$. For larger $p$ it follows from the hypothesis of the proposition that $k-1$ is not a $(k-2)$-th power residue modulo $p$, so neither is its inverse $(k-1)^{-1}$. Consequently, this congruence does not have any solutions in $\Z/p\Z$, and thus no solutions in $\rroi$. 

To establish part \ref{cond2}, we begin by deducing from Lemma~\ref{diff-poly} that the S.O.D.ing polynomial $\psi(X,Y,Z)$ satisfies $\psi(0,Y,0) = 2k(Y^{k-2} -1)$. By assumption we have $p \equiv 1 \mmod{k-2}$ which implies that there exists some integer $h$ such that $(p-1)/h=k-2$. Fix a primitive root $\zeta \in \Z/p\Z$. The congruence $Y^{k-2} \equiv 1 \mmod p$ has distinct solutions $\zeta^{jh}$ for $1 \leq j \leq k-2$, and consequently we have the factorization 
\[
Y^{k-2} -1 = \prod_{j=1}^{k-2} (Y-\zeta^{jh}). 
\]
Thus, we see that \ref{cond2} is satisfied for $x=z=0$. 

Next, we show that \ref{cond2} is also satisfied for a large family of choices $x,z \in \rroi$ with $x \neq z$. Fix any $x, z \in p\rroi$. Then by our above considerations, the polynomial $\psi(x,Y,z)$ satisfies 
\[
\psi(x,Y,z) \equiv \psi(0,Y,0) \equiv 2k(Y^{k-2} -1) \equiv 2k \prod_{j=1}^{k-2} (Y-\zeta^{jh}) \mmod{p}.
\]
In other words, the polynomial $\psi(x,Y,z)$ splits completely over $\Z/p\Z$, and all factors are distinct. It follows that 
\[
\partial_Y \psi(x,Y,z)|_{Y=\zeta^{jh}} 
\not\equiv 0 \mmod{p} \qquad \text{for all $1 \leq j \leq k-2$}
.\]    
We can therefore apply Hensel's lemma and lift the roots $\zeta^{jh} \in \Z/p \Z$ of $\psi(x,Y,z)$ uniquely to roots $\xi_1, \ldots, \xi_{k-2}$ in $\rroi$. This shows that for any choice of $x, z \in p\rroi$ the polynomial $\psi(x,Y,z)$, viewed as a polynomial in $Y$ only, splits completely, which proves \ref{cond2}.  
\end{proof}

\bibliographystyle{amsalpha}

\end{document}